\newtheorem{theorem}{Theorem}[section]
\newtheorem{prop}[theorem]{Proposition}
\newtheorem{lemma}[theorem]{Lemma}
\newtheorem{corollary}[theorem]{Corollary}
\theoremstyle{remark}
\newtheorem{remark}{Remark}
\newtheorem{example}{Example}
\newtheorem{definition}[theorem]{Definition}
\newcommand{\inv}{ \mathrm{inv}}
\newcommand{\mbfx}{\mathbf{x}}
\begin{document}

\title{Joint $q$--moments and shift invariance for the multi--species $q$--TAZRP on the infinite line}

\author{Jeffrey Kuan}

\date{}

\maketitle

\abstract{This paper presents a novel method for computing certain particle locations in the multi--species $q$--TAZRP (totally asymmetric zero range process). The method is based on a decomposition of the process into its discrete--time embedded Markov chain, which is described more generally as a monotone process on a graded partially ordered set; and an independent family of exponential random variables. A further ingredient is explicit contour integral formulas for the transition probabilities of the $q$--TAZRP. The main result of this method is a shift invariance for the multi--species $q$--TAZRP on the infinite line.

By a previously known Markov duality result, these particle locations are the same as joint $q$--moments. One particular special case is that for step initial conditions, ordered multi--point joint $q$--moments of the $n$--species $q$--TAZRP match the $n$--point joint $q$--moments of the single--species $q$--TAZRP. Thus, we conjecture that the Airy$_2$ process describes the joint multi--point fluctuations of multi--species $q$--TAZRP. 

As a probabilistic application of this result, we find explicit contour integral formulas for the joint $q$--moments of the multi--species $q$--TAZRP in the diffusive scaling regime. }

\section{Introduction}
In the mathematics and physics community, it has been known since at least the 1990s that the KPZ equation \cite{KPZ} describes a large class of processes. For just one example, the relationship between the ASEP (totally asymmetric simple exclusion process) and KPZ was noted as early as \cite{Der97}. In \cite{bdj1}, it was shown that the asymptotic fluctuations of the longest increasing subsequence of random permutations are the same as the asymptotic fluctuations of the largest eigenvalue of the GUE (Gaussian Unitary Ensemble) \cite{TW94}. This asymptotic distribution describing the fluctuations, the Tracy--Widom $F_2$ distribution, is \textit{universal} in the KPZ universality class. More recently, it was shown that WASEP (weakly asymmetric simple exclusion process) converges to the Cole--Hopf solution of the KPZ equation \cite{ACQ}, as well as to energy solutions of the KPZ equation (see e.g. the survey \cite{PatSurvey}, which also discusses the multi--species case).

The multi--point fluctuations in the KPZ universality class are widely believed to be the Airy$_2$ process, although a rigorous proof has not yet been discovered. However, more recently, a few papers have been able to take multi-point asymptotics for partially asymmetric, non--determinantal models in the KPZ class (\cite{QS} for the ASEP and \cite{Dim} for two--point asymptotics in the stochastic six vertex model). For totally asymmetric models with a determinantal structure, such results go back to \cite{JohShape}, and many others.

In the context of \textit{multi--species} (also called  \textit{colored}, or \textit{multi--class} or \textit{mult--type}) models introduced in \cite{Ligg76}, significantly less is known, or even conjectured, about the asymptotics fluctuations. In fact, it is not even entirely clear which statistic to analyze. It has only been in recent years that non--rigorous heuristics or conjectures have appeared in the literature, such as \cite{Spohn}. Even more recently, \cite{AHR} rigorously proves that the crossing probability of a two--species model called the AHR model (introduced in \cite{AHR98}), converges to a $F_2$ times a Gaussian. Also see \cite{GMW} for another paper on multi--species ASEP. In this present paper, a different statistic is studied, perhaps indicating that multi--species models have multiple interesting statistics in the asymptotic limit. 

The model studied here is the multi--species $q$--deformed totally asymmetric zero range process (TAZRP), introduced in \cite{Take15}, generalizing the single--species version in \cite{SW98}. It was recently shown in \cite{Haya} that in the $q\rightarrow 1$ limit, the single--species $q$--TAZRP converges to the usual KPZ equation, and additionally, the fluctuations of the $q$--TAZRP are also governed by Tracy--Widom $F_2$ \cite{FVAIHP, BARRAQUAND20152674}. In words, the main result of this paper (Corollary \ref{Cor}) states that (at finite--time) the distributions of the height functions of the multi--species $q$--TAZRP can be matched to the multi--point distributions of the height functions of the single--species $q$--TAZRP. Since those asymptotic fluctuations are conjectured to be the Airy$_2$ process, it is reasonable to conjecture that the asymptotic fluctuations of the multi--species $q$--TAZRP are also the Airy$_2$ process. Actually, an even more general statement can be proved (Theorem \ref{Thm}), which is somewhat easier to prove than proving Corollary \ref{Cor} directly -- {Chronologically, the author discovered the proof of Corollary \ref{Cor} first, using the methods in section 6, before discovering the more aesthetically pleasing proof of Theorem \ref{Thm}.  } The theorem states that the height functions are invariant under certain shift transformations. 

One probabilistic application of the main result is to write explicit formulas for the joint $q$--moments in the diffusive scaling regime (i.e. when time is scaled as $L$ and space is scaled as $L^{1/2}$). These joint $q$--moments can be expressed as contour integral formulas. From these formulas, it is possible to numerically calculate the fluctuations of the multi--species $q$--TAZRP from its hydrodynamic limit.

Note that similar--looking statements to Corollary \ref{Cor} have occurred in \cite{BGW} and \cite{Gal}. The key difference here is that the initial conditions in this paper occur on the infinite line, rather than with boundary conditions. On the infinite line, asymptotic results are more tractable. As far as the author is aware, the main result in this paper do not directly follow from the results in the aforementioned papers. In any case, the method presented here is significantly different.

The outline of this paper is as follows: section 2 defines the multi--species $q$--TAZRP and states the main theorem of this paper, as well as a corollary. Section 3 briefly covers necessary background and notation. Section 4 proves the main theorem, using the methods described in the abstract. Section 5 provides the asymptotic limit in the diffusive scaling limit. Section 6 provides an alternative proof of the corollary, and section 7 contains some numerical and computer data demonstrating some examples of the results of the paper. A brief discussion on the applicability of these methods to multi--species ASEP occurs at the end of section 4.

\textbf{Acknowledgments.}
The author would like to thank Alexei Borodin and Ivan Corwin for helpful discussions. Additionally, the author would like to thank an attendee of the University of Bristol Probability Seminar, whose name the author did not catch because the seminar was held in a hybrid format, for helpful research suggestions. The numerical simulations were computed with the help of the Texas A\&M High Performance Research Computing. Finally, the author would like to thank the organizers of the workshop ``Asymptotic Algebraic Combinatorics'', held at the Institute for Pure and Applied Mathematics  in Los Angeles in February 2020, where helpful ideas were sparked.

\section{Statement of Main Result}
\subsection{Definition of the multi--species $q$--TAZRP}
Recall the definition of the $n$--species $q$--TAZRP, also known as $q$--Boson. The model was introduced in \cite{Take15}, with the single--species model going back to \cite{SW98}. The state space for $n$--species $q$--TAZRP consists of particle configurations on a one--dimensional lattice. Here, we take that lattice to be $\mathbb{Z}$. At each lattice site, there may be arbitrarily many particles, with $n$ different species of particles. The state space is therefore $\left(\mathbb{Z}_{\geq 0}^n \right)^{\mathbb{Z}}$. Each $\eta \in \left(\mathbb{Z}_{\geq 0}^n \right)^{\mathbb{Z}}$ can be written as $\eta = (\eta_i^x)$, for $x \in \mathbb{Z}$ and $1 \leq i \leq n$, where $\eta_i^x$ denotes the number of particles of species $i$ located at lattice site $x$. 

With the state space defined, the jump rates can be stated. Informally, for a particle configuration $\xi = (\xi_i^x)$, the jump rates for an $i$th species particle at lattice site $x$ to jump one step to the right is 
$$
q^{\xi_1^x + \ldots + \xi_{i-1}^x} \frac{1-q^{\xi_i^x}}{1-q},
$$
Only one particle may jump at a time. More specifically, the generator can be written explicitly. If $\eta=(\eta_i^x)$ and $\xi=(\xi_i^x)$ are related by 
$$
\eta_j^y = \xi_j^y -1, \quad \eta_j^{y+1} = \xi_j^{y+1}+1
$$
for some $j,y$, with all $\eta_i^x = \xi_i^x$ for all other values of $i,x$, then write $\eta=\xi(j,y)$. Then the generator is defined by 
$$
L_{\text{mqTAZRP}}(\xi, \xi(j,y)) =  q^{\xi_1^y + \ldots + \xi_{j-1}^y} \frac{ 1 - q^{\xi_j^y}}{1-q} .
$$
If the particle configuration $\xi(j,y)$ does not exist, then formally set 
$$
L_{\text{mqTAZRP}}(\xi, \xi(j,y)) = 0.
$$
In this notation, particles of species $i$ have priority over particles of species $j$ for $i<j$. An example of the jump rates is shown below in Figure \ref{TAZER}.

\begin{figure}
\begin{center}
\includegraphics{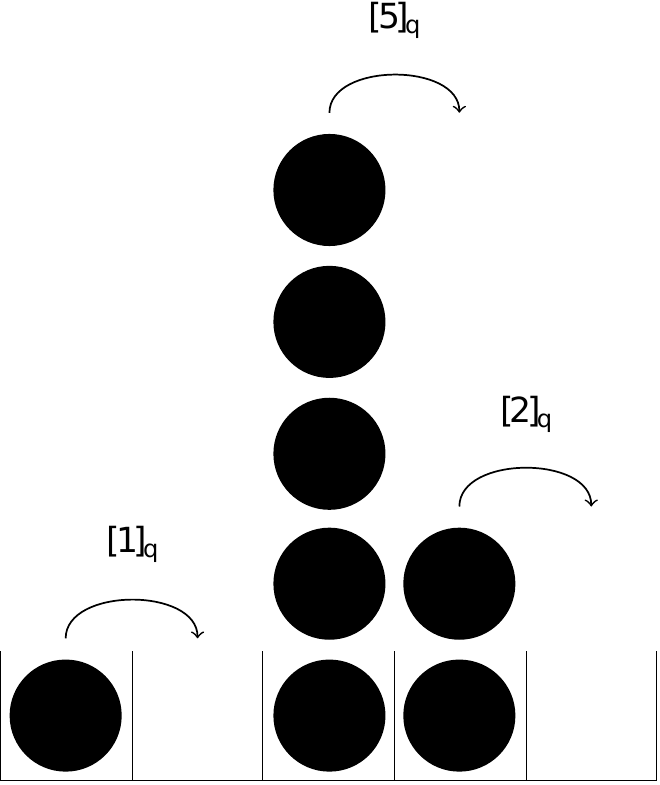}
\hspace{0.3in}
\includegraphics{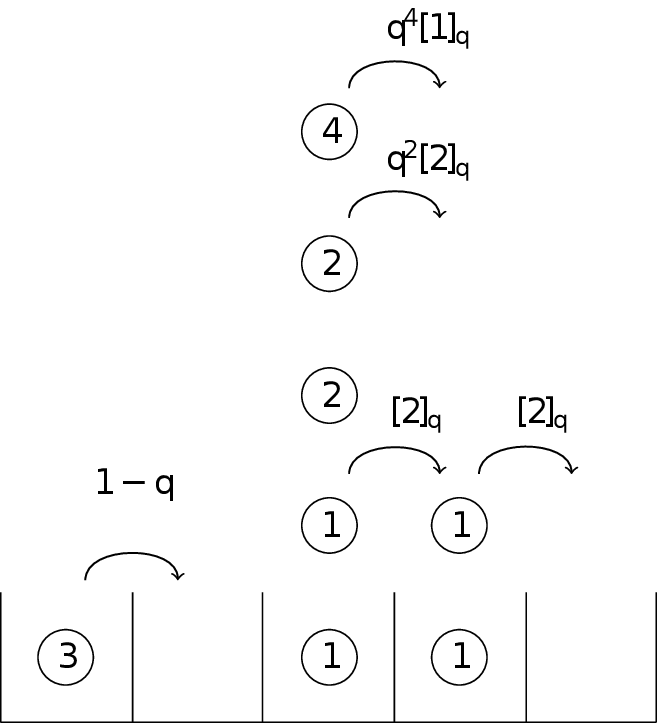}
\caption{Example of jump rates for the homogeneous $q$--TAZRP. The left image shows single--species and the right image shows multi--species. Note that $q^4[1]_q + q^2[2]_q + [2]_q  = [5]_q$.}
\label{TAZER}
\end{center}
\end{figure}

\subsection{Height Functions and Shift Invariance}
Given any $1 \leq j \leq n$ and a particle configuration $\eta=(\eta_x^i)$, set
$$
N^{(n+1-j)}_y(\eta) = \sum_{x=y}^{\infty} \sum_{i=1}^{n+1-j} \eta^x_i.
$$
In words, $N_y^{(n+1-j)}(\eta)$ counts the number of species $1,\ldots,n+1-j$ particles that are weakly to the right of $y$. The reason for using the notation $n+1-j$ in the superscript is related to the space--reversal in the duality statement of Proposition \ref{Dual} below;  These are sometimes called the \textit{height functions}.

Given a pair of particle configurations $(X,Y)$, define the \textit{intersection numbers} $I_{ij}(X,Y)$ by 
$$
I_{ij}(X,Y) = \left| [x_i,y_i] \cap [x_j,y_j] \right|
$$
for $1 \leq i,j \leq n$. These are similar to the so--called ``intersection matrices'' is made in \cite{Gal}.

With these definitions in place, the main theorem can now be stated:

\begin{theorem}\label{Thm}
Suppose that $\eta(t)$ evolves as the multi--species $q$--TAZRP and begins in initial conditions with infinitely many $i$th species particles at $y_i$ for some integers $y_1,\ldots,y_n$.  Fix lattice sites $x_1,\ldots,x_n$ (not necessarily distinct).  Let $\xi(t)$ also be a multi--species $q$--TAZRP that begins in initial conditions with infinitely many $i$th species particles at $y'_i$ for some integers $y'_1,\ldots,y_n'$. Fix lattice sites $x'_1,\ldots,x'_n$ (not necessarily distinct).  

Assume that there exists an integer $j_0 \in [1,n]$ such that 
$$
(x_1',\ldots,x_n') - (x_1,\ldots,x_n) = (y_1',\ldots,y_n') - (y_1,\ldots,y_n) = \epsilon_{j_0}
$$
where $\epsilon_{j_0}$ has a $1$ in the $j_0$ component and zeroes elsewhere. Furthermore assume that there is a sequence $\{(X(l),Y(l)):1 \leq l \leq L\}$ of pairs of particle configurations such that the intersection numbers $I_{ij}(X(l),Y(l))$ are invariant with respect to $l$ and that
$$
X(1)=(x_1',\ldots,x_n') , \quad Y(1) = (y_1',\ldots,y_n'), \quad X(L)_1 = \ldots X(L)_n
$$
and
$$
X(l+1)-X(l)=Y(l+1)-Y(l)= \epsilon_{j_l}
$$
for some $j_1, \ldots,j_K$.

Then for any non--negative integers $k_1\ldots,k_n$
$$
\mathbb{E}_y\left[ \prod_{i=1}^n q^{k_jN_{x_j}^{(n+1-j)}(\eta(t))} \right] = \mathbb{E}_{y'}\left[ \prod_{i=1}^n q^{k_jN_{x'_j}^{(n+1-j)}(\xi(t))} \right] .
$$
Furthermore, this quantity has an explicit contour integral formula. Set $N=k_1+\ldots+k_n$ and define $M_1,\ldots,M_N$ by 
$$
{M}_{k_1+\ldots+k_m+1} = {M}_{k_1+\ldots+k_m+2} = \cdots = {M}_{k_1+\ldots + k_{m+1}} = [y_m,x_m].
$$
Then
$$
\mathbb{E}\left[ \prod_{j=1}^n q^{k_j N_{x_j}^{(n+1-j)}(\eta(t))}\right] = \frac{(-1)^N q^{N(N-1)/2}}{(2\pi i)^N}\int \frac{dw_1}{w_1} \cdots \int \frac{dw_N}{w_N} B(w_1,\ldots,w_N) \prod_{j=1}^N (1-w_j)^{-{M}_j} e^{-w_jt  }   .
$$
where the $w_r$--contour contains $qw_{r+1},\ldots, qw_n$ and $1$, but not $0$.

\end{theorem}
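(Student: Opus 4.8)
The plan is to treat the two assertions in turn: the equality of joint $q$--moments (the shift invariance) via the probabilistic decomposition advertised in the abstract, and the contour integral formula via duality together with the explicit transition probabilities.

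First I would apply the Markov duality of Proposition \ref{Dual} to rewrite each height--function moment $\mathbb{E}_y[\prod_{j} q^{k_j N_{x_j}^{(n+1-j)}(\eta(t))}]$ as an expectation for a dual system of exactly $N = k_1 + \cdots + k_n$ particles. The gain is that the multi--species moment becomes a statement about the joint locations of finitely many dual particles whose starting data is grouped by the $y_m$ with multiplicities $k_j$ and whose target data is grouped by the $x_m$; this is the object that the decomposition is built to analyze.

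Next I would decompose the continuous--time dual dynamics into its embedded discrete--time jump chain and an independent family of exponential holding times, and view the jump chain as a monotone process on a graded poset. The elapsed time $t$ is then carried entirely by the (initial--data--independent) exponential clocks, so the joint law of the relevant particle locations factors through the jump--chain law. The heart of the argument is to show that this law depends on the pair $(X,Y)$ only through the intersection numbers $I_{ij}(X,Y)$: a simultaneous unit shift of $X$ and $Y$ by $\epsilon_{j_l}$, which by hypothesis leaves every $I_{ij}$ fixed, should induce a measure--preserving relabeling of jump--chain trajectories that commutes with the clocks. Granting this elementary invariance, the sequence $\{(X(l),Y(l))\}$ concatenates the unit shifts from the $\xi$--data $(X(1),Y(1)) = ((x_i'),(y_i'))$ to the reference configuration with all coordinates of $X(L)$ coinciding, while the hypothesis $(x')-(x)=(y')-(y)=\epsilon_{j_0}$ exhibits the $\eta$--system and the $\xi$--system as one such elementary shift apart; chaining the equalities gives the claimed moment identity.

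Finally, for the explicit formula I would substitute the known nested contour integral for the $q$--TAZRP transition probabilities into the dual expression and sum over intermediate configurations. The single--particle propagators produce the factors $(1-w_j)^{-M_j}e^{-w_j t}$, with the intervals $M_j = [y_m,x_m]$ assigned according to the step--like grouping by the $k_j$, and the pairwise interactions collect into $B(w_1,\ldots,w_N)$; the nested contour prescription (each $w_r$--contour enclosing $qw_{r+1},\ldots,qw_N$ and $1$ but not $0$) comes from the residue bookkeeping at the poles $w_a = qw_b$. The step I expect to be genuinely delicate is the claim that the jump--chain law depends on $(X,Y)$ only through the intersection numbers, since this is where the graded--poset monotonicity and the memorylessness of the exponential clocks must be combined into an explicit trajectory bijection; by comparison, fixing the contour topology in the final formula is routine.
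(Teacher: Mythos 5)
Your outline reproduces the paper's general framework (duality via Proposition \ref{Dual} to pass to a finite dual system, then the decomposition into an embedded jump chain plus exponential clocks on a graded poset), but the step you yourself flag as ``genuinely delicate'' is not merely delicate --- it is false as stated. You claim that the jump-chain law depends on $(X,Y)$ only through the intersection numbers $I_{ij}(X,Y)$, so that a simultaneous shift by $\epsilon_{j_l}$ induces a measure-preserving relabeling of discrete trajectories commuting with the clocks. The paper explicitly warns against this: the remark following the first case of the proof states that numerical simulations show the hitting probability $\mathbb{P}_{x,\mathcal{H}}(y)$ of the embedded chain does \emph{not} depend only on the intersection numbers, and Section 7 exhibits a concrete counterexample ($X=(0,-1,-2)$, $Y=(1,3,4)$ versus $X=(0,-1,-3)$, $Y=(1,3,3)$ have identical intersection numbers but different hitting probabilities). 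The obstruction is that the holding rates $-L(z,z)$ depend on how particles bunch at common sites along a trajectory, so the shift invariance is a genuinely continuous-time phenomenon: it holds for $\mathbb{P}_x(X(t)\leq y)$ but not for the discrete skeleton, and hence cannot be derived from a trajectory bijection of the embedded chain alone.

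Accordingly, the paper's proof splits into two cases. When the shifted particle $j_0$ occupies a lattice site by itself, your bijection idea does work and is exactly Proposition \ref{poset}(b): one builds $\iota:\mathcal{C}(y)\to\mathcal{C}(y')$ by $\iota(y-\epsilon_j)=y'-\epsilon_j$, matching both the transition probabilities $T(z,y)=T(\iota(z),y')$ and the holding rates $L(z,z)=L(\iota(z),\iota(z))$, and inducts on the rank function. When the shifted particle shares a site with other particles, no such bijection exists, and the paper instead combines the Markov projection (color-blind) property, preservation of $q$-exchangeable initial conditions, and the explicit nested contour integral formulas of Proposition \ref{AHPThm2}, in a double induction on $d=\rho(y)-\rho(x)$ and on the order $n$ of the $t$-derivative at $t=0$; the key analytic input is the antisymmetrization identity for $B(w_1,\ldots,w_N)$ that forces certain contour integrals to vanish. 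Your proposal is missing this entire second half, which is where the contour integral formulas actually enter the proof of the shift invariance itself, not merely in assembling the final formula as your outline suggests.
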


\begin{remark}
The theorem can be applied to inductively to any sequence $x,x',x'',\ldots,$ and $y,y',y'',\ldots$ as long as the consecutive intersection numbers remain unchanged. 
\end{remark}

\begin{remark}
It is worth noting that if infinitely many $i$th species and infinitely many $j$th species particles occupy a lattice site, where $i<j$, then the $j$th species particles have jump rate $q^{\infty}=0$, and therefore never jump.
\end{remark}

By applying the theorem inductively,  the initial conditions can be shifted so that all particles start at the same lattice site:

\begin{corollary}\label{Cor}
Suppose $y_1<\ldots<y_n$ and $x_1 \geq x_2 \geq \ldots \geq x_n$. Then
$$
\mathbb{E}\left[ \prod_{j=1}^n q^{k_j N_{x_j}^{(n+1-j)}(\eta(t))}\right] = \mathbb{E}\left[ \prod_{j=1}^n q^{k_j N_{M_j}^{(n+1-j)}(\xi(t))}\right],
$$
where $\xi(t)$ is a single--species $q$--TAZRP with step initial conditions, i.e. all particles starting at $0$.
\end{corollary}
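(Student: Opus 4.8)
The plan is to derive Corollary \ref{Cor} from Theorem \ref{Thm} by induction, transporting the initial reservoirs $y_1 < \cdots < y_n$ to a single common lattice site and then collapsing the resulting multi--species process to a single--species one. The key preliminary observation is that the ordering hypotheses $y_1 < \cdots < y_n$ and $x_1 \geq \cdots \geq x_n$ force the intervals $[y_i,x_i]$ to be nested, $[y_n,x_n] \subseteq \cdots \subseteq [y_1,x_1]$. For nested intervals the intersection number $I_{ij}(X,Y)$ equals the cardinality of the inner interval, i.e. $I_{ij} = x_{\max(i,j)} - y_{\max(i,j)} + 1$, which depends only on the individual interval lengths. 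Consequently, any family of translations of the individual intervals that preserves the nesting automatically preserves every intersection number, which is exactly the invariance required in the hypothesis of Theorem \ref{Thm}.

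First I would fix the common target to be the rightmost reservoir $s = y_n$ and translate each interval $[y_i,x_i]$ to $[s,\,x_i + s - y_i]$, so that all left endpoints coincide at $s$ while lengths are unchanged. Because $x_i - y_i$ is strictly decreasing in $i$ (as $x_i$ is non--increasing while $y_i$ is increasing), the translated right endpoints $x_i + s - y_i$ remain strictly ordered, so the target configuration is again nested and carries the same intersection numbers as the original. It then remains to realize this net translation by a sequence of unit shifts $\epsilon_{j_l}$, applying Theorem \ref{Thm} once per shift; each shift advances the reservoir $y_{j_l}$ and the observation site $x_{j_l}$ together by one, and the theorem guarantees the joint $q$--moment is unchanged provided the nesting (hence the intersection numbers) is maintained at every intermediate step.

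Once all reservoirs sit at the common site $s$, with infinitely many particles of each species present, the second Remark applies: a species--$j$ particle with $j \geq 2$ has jump rate $q^{\eta_1^s + \cdots} = q^{\infty} = 0$, so species $2,\ldots,n$ never move and only species $1$ evolves, as a single--species $q$--TAZRP with step initial data at $s$. For an observation site $x_j' = x_j + s - y_j$ lying strictly to the right of $s$, the frozen lower species contribute nothing, so $N^{(n+1-j)}_{x_j'} = N^{(1)}_{x_j'}$ collapses to the single--species height function; translating $s$ to the origin turns this into step initial conditions observed at $M_j = x_j - y_j$, with $M_1 > \cdots > M_n$, matching the right--hand side. (When some $x_j' \leq s$ both sides reduce to $q^{\infty}=0$, so the identity still holds.) The main obstacle is the middle step: exhibiting an explicit order of unit shifts that keeps the configuration nested — equivalently, keeps all intersection numbers invariant — at every intermediate stage, so that Theorem \ref{Thm} may legitimately be invoked at each step. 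The difficulty is concentrated at ties $x_i = x_{i+1}$, where naively advancing the inner interval first would momentarily shorten an intersection; I would resolve this by a greedy schedule that at each stage advances only an interval whose unit rightward shift preserves nesting, and verify by a short induction that such a legal move always exists until the common--site configuration is reached.
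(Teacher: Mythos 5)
Your proposal is correct and follows essentially the same route as the paper: the paper's proof of Corollary \ref{Cor} is precisely the inductive application of Theorem \ref{Thm} to shift all initial reservoirs to a common lattice site, combined with the paper's remark that species $2,\ldots,n$ are then frozen (jump rate $q^{\infty}=0$), so the surviving species-$1$ dynamics is a single--species $q$--TAZRP with step initial data and the height functions collapse as you describe. The scheduling detail you flag as the main obstacle does go through as you predict: since the interval lengths $x_i-y_i$ are strictly decreasing, the innermost interval of the leftmost block of tied left endpoints can always be advanced by one unit without breaking nesting, so a legal greedy move exists until all left endpoints coincide.
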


\begin{remark}
The reader may naturally be curious if this method can be applied to other models, such as the multi--species ASEP. This is discussed in section 4.4.
\end{remark}

\section{Background results and notation}

\subsection{$q$--notation}

Fix $0<q<1$. For any $k\geq 0$, let 
$$
[k]_q = \frac{1-q^k}{1-q} = 1 + q + q^2 + \ldots + q^{k-1}
$$ 
be the $q$--deformed integer. Let $[k]_q^! = [1]_q \cdots [k]_q$ be the $q$--deformed factorial. The $q$--Pochhammer symbol is
$$
(\alpha;q)_k = (1-\alpha)(1-q\alpha) \cdots ( 1-q^{k-1}\alpha), \quad 0 \leq k \leq \infty.
$$
Observe that $(1-q)^k[k]_q^! = (q;q)_k$.

For each integer $r\geq 1$ and each finite sequence of non--negative integers $\mathbf{m}=(m_1,m_2,\ldots,m_r)$ whose sum is $N=m_1 + \ldots + m_r$, define the $q$--multinomial 
$$
\left[ 
\begin{array}{c}
N\\
\mathbf{m}
\end{array}
\right]_q
:=
\left[ 
\begin{array}{c}
m_1 + \ldots + m_r \\
m_1,m_2,\ldots,m_r
\end{array}
\right]_q
= \frac{[N]^!_q}{ [m_1]^!_q \cdots [m_r]^!_q}.
$$

\subsection{Duality formula for $q$--moments}
Let $\vec{k}=(k_1,\ldots,k_n)$ and $\vec{M}=(M_1,\ldots,M_n)$. Let $ \mathcal{S}(\vec{k},\vec{M})$ denote the set of all particle configurations $\xi$ with exactly $k_j$ particles of species $j$, with all species $j$ particles located strictly to the left of $M_{n+1-j}$. More precisely,
$$
\xi \in \mathcal{S}(\vec{k},\vec{M}) \Leftrightarrow \sum_{x\in \mathbb{Z}} \xi_j^x = k_j \text{ and } \xi^z_j=0 \text{ for all } 1 \leq j \leq n , z\geq M_{n+1-j}.
$$
\begin{prop}\label{Dual}
Suppose that $\eta(t)$ begins in the initial conditions with infinitely many $i$th class particles at $-M_i$ for some integers $0 < M_n < \ldots < M_1$. Fix non--negative integers $k_1,\ldots,k_n$. Then
$$
\mathbb{E}\left[ \prod_{j=1}^n q^{k_j N_0^{(n+1-j)}(\eta)}\right] = \sum_{\xi \in \mathcal{S}(\vec{k},\vec{M})}\mathrm{Prob}_0( \xi \text{ at time } t).
$$
The initial conditions on the right--hand--side consist of $k_j$ particles of species $j$ located at $0$ for $1 \leq j \leq n$.
\end{prop}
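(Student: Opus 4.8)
The plan is to deduce this identity from the known Markov self--duality of the multi--species $q$--TAZRP by evaluating the duality function on two special families of configurations. Recall that this model carries a $q$--moment duality function $\mathcal{D}(\eta,\xi)$, built as a product of height--function factors and incorporating the species reversal $j\mapsto n+1-j$ together with a reflection of space (this reflection is precisely what is referred to in the remark preceding the statement). The duality relation asserts that, for the generator $L$,
$$
\big(L\,\mathcal{D}(\cdot,\xi)\big)(\eta)=\big(L\,\mathcal{D}(\eta,\cdot)\big)(\xi),
$$
and hence, by the standard semigroup argument,
$$
\mathbb{E}_{\eta}\big[\mathcal{D}(\eta(t),\xi)\big]=\mathbb{E}_{\xi}\big[\mathcal{D}(\eta,\xi(t))\big]
$$
for any fixed pair of initial configurations. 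I would take $\eta$ to be the step configuration with infinitely many species--$i$ particles at $-M_i$, and $\xi$ to be the configuration with $k_j$ particles of species $j$ stacked at the origin; the proposition then falls out of evaluating each side.

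For the left--hand side I would use the fact that $\mathcal{D}$ is the $q$--moment duality function, in which a dual particle of species $j$ at site $z$ contributes the single factor $q^{N_z^{(n+1-j)}(\eta)}$ with no cross terms. When all $k_j$ dual particles of species $j$ sit at $0$, this collapses to
$$
\mathcal{D}(\eta,\xi)=\prod_{j=1}^n q^{k_j N_0^{(n+1-j)}(\eta)}.
$$
Taking $\eta=\eta(t)$ and the expectation reproduces the $q$--moment on the left of the claimed identity.

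For the right--hand side I would exploit the infinite initial condition. Because species $i$ occupies $-M_i$ with infinite multiplicity and $0<M_n<\cdots<M_1$, the height function $N_z^{(n+1-j)}(\eta)$ equals $+\infty$ as soon as $z$ lies weakly to the left of $-M_{n+1-j}$, and vanishes otherwise. Hence, using $q^\infty=0$, the factor attached to a dual particle of species $j$ is $0$ unless that particle sits strictly to the right of $-M_{n+1-j}$; after the space reflection built into $\mathcal{D}$ this is exactly the support condition $\xi_j^z=0$ for $z\ge M_{n+1-j}$ defining $\mathcal{S}(\vec{k},\vec{M})$. Thus $\mathcal{D}(\eta,\xi)=\mathbf{1}[\xi\in\mathcal{S}(\vec{k},\vec{M})]$, and
$$
\mathbb{E}_{\xi}\big[\mathcal{D}(\eta,\xi(t))\big]=\sum_{\xi}\mathrm{Prob}_0(\xi \text{ at time } t)\,\mathbf{1}[\xi\in\mathcal{S}(\vec{k},\vec{M})]=\sum_{\xi\in\mathcal{S}(\vec{k},\vec{M})}\mathrm{Prob}_0(\xi \text{ at time } t),
$$
which is the right--hand side.

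The main obstacle I anticipate is the rigorous treatment of the infinite initial condition, since the self--duality is most cleanly established for finitely many particles. I would handle this by a limiting argument: replace the infinite stacks at $-M_i$ by $m$ particles each, apply the finite duality, and let $m\to\infty$. On the left this is controlled by a monotone coupling together with dominated convergence (the integrand is bounded by $1$ and $N_0^{(n+1-j)}$ increases to its limiting value), while on the right the factors $q^{N}$ for the ``blocked'' dual particles decrease to $q^\infty=0$, producing the sharp indicator of $\mathcal{S}(\vec{k},\vec{M})$ in the limit. A secondary point requiring care is the precise bookkeeping of the space-- and species--reversal in $\mathcal{D}$, so that the region forced by $q^\infty=0$ matches the inequality $z\ge M_{n+1-j}$ defining $\mathcal{S}$ rather than its reflection; the computation above confirms that the reflection $z\mapsto -z$ turns ``strictly right of $-M_{n+1-j}$'' into ``strictly left of $M_{n+1-j}$'' as required.
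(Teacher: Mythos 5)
Your proposal is correct and takes essentially the same route as the paper: the paper likewise invokes the known self--duality of the multi--species $q$--TAZRP (Theorem 2.5(b) of \cite{KIMRN}, equivalently Corollary 5.3 of \cite{KuanCMP}) and observes that, against the infinite stacks at $-M_i$, the duality function degenerates to the indicator of $\mathcal{S}(\vec{k},\vec{M})$ because $q^{\infty}=0$, which is exactly your evaluation of the two sides. Your truncation--and--limit treatment of the infinite initial condition is an added layer of rigor on the same argument, not a different method.
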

\begin{proof}
This follows from the duality result of Theorem 2.5(b) \cite{KIMRN};  see also Corollary 5.3 of \cite{KuanCMP} for the same duality result with a different proof. The only additional observation required is that for infinitely many particles of species $i$ at a single site, the duality function only takes the values $1$ or $0=q^{\infty};$ i.e. the duality function becomes an indicator function.
\end{proof}

\subsection{Transition Probabilities for the single--species $q$--TAZRP}
Set
$$
S(w_a,w_b) = - \frac{qw_b-w_a}{qw_a-w_b}
$$
and for $\sigma \in S_n$ (the permutation group on $n$ letters),
$$
A_{\sigma} = \prod_{\substack{ \sigma(i)>\sigma(j) \\ 1 \leq i < j \leq n}} S(w_{\sigma(j)},w_{\sigma(i)}).
$$
Let 
$$
B\left(w_{1}, \ldots, w_{n}\right)=\prod_{1 \leq i<j \leq n} \frac{w_{i}-w_{j}}{w_{i}-q w_{j}}
$$
By equation (5.1) of \cite{WW},
\begin{equation}\label{WW51}
\sum_{\sigma \in S_{n}} A_{\sigma}\left(w_{\sigma^{-1}(1)}, \ldots, w_{\sigma^{-1}(n)}\right)=[n]_{q} ! B\left(w_{1}, \ldots, w_{n}\right)
\end{equation}
\begin{prop}\label{AHPThm2}
Let $\mathbf{N}=(N_1,\ldots,N_n)$ and $N=N_1+\ldots N_n$. Fix a particle configuration $x$ where $x=(x_1,\ldots,x_N)$. Given initial conditions where all $N$ particles begin at $0$,
\begin{multline*}
\mathrm{Prob}_0(X(t)=x) \text{ at time } t) =  \\
\times  \frac{(-1)^N}{W(x)} \left( \frac{1}{2\pi i}\right)^N \sum_{\omega \in S(N)} \int_{\mathcal{C}_R} \cdots \int_{\mathcal{C}_R} B(w_1,\ldots,w_N)\prod_{j=1}^N \left[ \prod_{k=y_{\omega(j)}}^{x_j} \left( \frac{1}{1 - w_{\omega(j)}}\right) e^{-w_jt}  \right] dw_1 \cdots dw_N,
\end{multline*}
where the $\mathcal{C}_R$ are large contours, centered at the origin, that enclose the poles $b_k$ and $w_j=qw_i$. Here, 
$$
W(x) = \prod_{z \in \mathbb{Z}} \frac{1}{[k_z]_q^!}, \quad k_z = \left| \{ k: x_k=z\} \right|
$$
\end{prop}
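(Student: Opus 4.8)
The plan is to establish the formula by the coordinate Bethe ansatz: I would verify that the right--hand side, regarded as a function $P_t(x)$ of the configuration $x$ and time $t$, solves the forward (master) equation $\frac{d}{dt}P_t = L^{\ast}P_t$ for the single--species generator $L$, subject to the initial condition $P_0(x) = \mathbf{1}[x=(0,\dots,0)]$, and then appeal to uniqueness within the class of summable solutions to conclude that $P_t$ is the transition probability. The first step is to read off the ansatz structure: for fixed spectral parameters $w_1,\dots,w_N$ the integrand factorizes as the symmetric amplitude $B(w_1,\dots,w_N)$ times a product of one--particle factors $\prod_j (1-w_{\omega(j)})^{-(\cdots)} e^{-w_j t}$, i.e. a superposition of plane waves indexed by $\omega\in S_N$. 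I would first check that each such plane wave is an eigenfunction of the \emph{free} part of the generator (the hopping away from the interaction diagonals): since a single particle hops right at rate $[1]_q=1$, the factor $(1-w)^{-x}$ has free eigenvalue $-w$, which matches $\frac{d}{dt}e^{-wt}=-w\,e^{-wt}$, so each summand solves the free equation term by term.

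The crux is the interaction, which enters through the occupation--dependent rate $[\xi^y_j]_q$ whenever particles share a site. In the ordered--coordinate picture one extends $P_t$ to a free evolution on all of $\mathbb{Z}^N$ and imposes boundary (two--body) conditions along the diagonals $x_i = x_{i+1}$ so that the extended free dynamics reproduces the true $q$--deformed rates. I would show that these boundary conditions hold precisely when consecutive spectral parameters are exchanged with the two--body scattering factor $S(w_a,w_b) = -(qw_b-w_a)/(qw_a-w_b)$, which is exactly the datum built into $A_\sigma$. The standard reduction --- that all many--body boundary conditions follow from the single two--body one, because $A_\sigma$ is multiplicative over inversions --- then applies, and the symmetrization identity \eqref{WW51} repackages $\sum_\sigma A_\sigma$ into the single symmetric amplitude $[N]_q!\,B$ appearing in the statement. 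Verifying that the two--body condition forces exactly this $S$, and that it propagates consistently to all pairs, is where I expect the main technical work to lie.

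Finally I would pin down the initial condition and the normalization by a residue analysis. At $t=0$ the exponentials drop out, and deforming the large contours $\mathcal{C}_R$ (which enclose the poles at $w_j = q w_i$ together with the geometric singularities) the iterated integral collapses onto the residue at the configuration with all particles at $0$; the prefactor $(-1)^N/W(x)$, in which $W(x)$ records the site multiplicities through the $q$--factorials, is exactly what forces this residue to evaluate to $\mathbf{1}[x=(0,\dots,0)]$ and to account correctly for the indistinguishability of particles occupying a common site. Because the forward equation is a linear system of ODEs with a unique bounded solution for given initial data, matching the free equation, the two--body boundary conditions, and the initial data establishes the formula. The principal obstacle is thus twofold: confirming the two--body boundary condition and its propagation via \eqref{WW51}, and executing the contour/residue bookkeeping that yields the correct initial condition and the multiplicity normalization $W(x)$.
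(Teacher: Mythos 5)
Your approach is sound, but note that the paper itself does not prove this proposition at all: Proposition \ref{AHPThm2} sits in the background section and is quoted from the literature (equation (5.1) of \cite{WW} and the formulas of \cite{KoLe}), and its multi--species generalization, Proposition \ref{AHPThm}, is likewise ``proved'' in the paper only by citation to Theorem 3.2 of \cite{KuanAHP} and Corollary 1.3 of \cite{Wawa}. What you have sketched is essentially the derivation carried out in those references: the coordinate Bethe ansatz, i.e.\ extending the transition probability to a function on $\mathbb{Z}^N$ solving the free forward equation, imposing two--body boundary conditions along the diagonals, checking that this forces the scattering factor $S(w_a,w_b)=-(qw_b-w_a)/(qw_a-w_b)$, symmetrizing via \eqref{WW51}, and fixing the initial data and normalization by residue analysis together with uniqueness for the forward equation (which holds here because the jump rates are uniformly bounded by $1/(1-q)$). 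Two points deserve emphasis if you carry this out in full. First, since the $q$--TAZRP is a zero--range process, arbitrarily large clusters occupy a single site, so the claim that the single two--body condition reproduces the rate $[k]_q$ for a $k$--particle cluster is genuinely where the $q$--deformation and the multiplicity weight $W(x)=\prod_z 1/[k_z]_q^!$ enter; this consistency check is the real content of the proof, not a routine ``standard reduction,'' and it is what distinguishes the zero--range case from the exclusion--process setting where coordinates are strictly ordered. Second, the $t=0$ residue bookkeeping must be done against weakly ordered configurations with multiplicities, which is exactly what the $W(x)$ prefactor compensates for. In short: your route is correct and self--contained where the paper simply cites, and the cost is precisely the two technical verifications you flagged.
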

The paper \cite{KoLe} also gives the formula in terms of small contours, which will be described later. 

\begin{remark}
The results actually hold for more general initial conditions, but these will not be needed, at least until section 6.
\end{remark}

\subsection{Markov projection property}
The multi--species $q$--TAZRP has a property called the Markov projection property, which generalizes a property originally known as lumpability \cite{KS60}. This is also called the ``color--blind'' projection property. This states that if one views all particles as identical, the system is reduced to a single--species model. A specific case of this is used in the proof of the corollary. 

\section{Proof of the Main Theorem}
By the duality result, the $N$th $q$--moments can be written in terms of transition probabilities of the $N$--particle system. For the remainder of the proof, only the transition probabilities of the finite--particle system will be studied. Note that applying the duality reverses the direction of the jumps, so that the roles of $x$ and $y$ are now reversed.

\subsection{Markov processes and embedded Markov chains on posets}
The first step is to decompose the continuous--time multi--species $q$--Boson into its discrete--time embedded Markov chain and its local times. It will actually be easier to state this step in terms of a general monotone Markov process on a partially ordered set. The next subsection contains essentially a stand--alone result that can be read independently of the rest of the paper. As far as the author knows, these results are new, as are their applications in integrable probability.

The following definitions are standard in probability theory:
\begin{definition}
Consider a continuous--time Markov process $X(t)$ on a discrete state space, with generator $L(x,y)$. The \textit{discrete--time embedded Markov chain} is defined on the same state space, with transition probabilities
$$
P(x,y) = 
\begin{cases}
- \dfrac{L(x,y)}{L(x,x)}, & x \neq y\\
0, & x=y.
\end{cases}
$$
\end{definition}

\begin{definition}
Consider a  Markov process or Markov chain $X(t)$ on a state space which is also a partially ordered set with ordering denoted by $\leq$. It is \textit{monotone} if for any times $s<t$, the event $\{X(t) \geq X(s)\}$ occurs almost surely.

\end{definition}

The next proposition has possibly occurred somewhere in the literature before, but the author is unaware of any appropriate references. 

\begin{prop}\label{old}
Let $X(t)$ be a time--homogeneous monotone Markov process on a poset $\mathcal{S}$ with generator $L(x,y)$, and let $Y(n)$ denote its embedded Markov chain.  Additionally let $\{\mathcal{E}_z: z \in \mathcal{S}\}$ be independent exponential random variables with respective parameters $\{-L(z,z): z\in \mathcal{S}\}$.

Let $\Pi_{xy}$ denote the set of all sequences $\{(x=x_0 \leq z_1 < \cdots < z_n =y)\}$ with the property that every $\mathbb{P}(Y(1)=z_n \vert Y(0) = z_{n-1})$ is nonzero. In other words, there is a positive probability that $Y(k)$, starting from $x$, is located at $z_k$ (for $1\leq k \leq n$) up until it hits $y$.  For any $\pi \in \Pi_{xy}$, let 
$$\mathcal{E}_{\pi} = \sum_{z \in \pi} \mathcal{E}_z.$$ 
Then
$$
\mathbb{P}_x(X(t) \leq y) = \sum_{n \geq 0} \sum_{ \substack{ \pi \in \Pi_{xy} \\ l(\pi)=n}} \mathbb{P}_x(Y(1)=z_1,\ldots,Y(n-1)=z_{n-1},Y(n)=y) \mathbb{P}\left(  \mathcal{E}_{\pi} \geq t \right), 
$$
where $\pi=(x< z_1< \ldots < z_n = y)$ and $l(\pi)$ is the length of $\pi$.
\end{prop}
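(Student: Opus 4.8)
The plan is to exploit monotonicity to convert the two–sided event $\{X(t)\leq y\}$ into a one–sided first–exit event, and then to expand that event over the trajectories of the embedded chain $Y$ using the standard ``jump chain plus exponential holding times'' construction of a continuous–time Markov process.

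First I would record the one consequence of monotonicity that drives everything. Write $D_y=\{z\in\mathcal{S}:z\leq y\}$ for the principal down–set of $y$. If $X(s)\not\leq y$ for some $s\leq t$, then monotonicity gives $X(t)\geq X(s)$, and $X(t)\leq y$ would force $X(s)\leq X(t)\leq y$, a contradiction; hence the process can never re–enter $D_y$ after leaving it, so
$$
\{X(t)\leq y\}=\{\sigma_y>t\},\qquad \sigma_y:=\inf\{s:X(s)\not\leq y\}.
$$
This is the only use of the monotonicity hypothesis, and it reduces the claim to computing the tail $\mathbb{P}_x(\sigma_y>t)$ of the first exit time from $D_y$.

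Next I would invoke the construction of $X(t)$ from $Y$ and the holding times: from a state $z$ the process waits an $\mathrm{Exp}(-L(z,z))$ time $\mathcal{E}_z$ and then jumps according to $P(z,\cdot)$, the holding times being mutually independent and independent of the jump chain given the visited states. Under this construction $\sigma_y$ is the sum of the holding times accumulated at the states $Y$ occupies while inside $D_y$. I would then condition on the trajectory $Y$ traces in $D_y$ before leaving; by monotonicity this is a strictly increasing sequence $x=z_0<z_1<\cdots$. The structural input I must verify (and which holds in the present graded setting) is that the chain can leave $D_y$ only from $y$ itself, so that every such trajectory terminates at $y$ and is an element of $\Pi_{xy}$, each of its transitions having positive probability as required. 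Granting this, for fixed $\pi=(x<z_1<\cdots<z_n=y)\in\Pi_{xy}$ the event that $Y$ follows $\pi$ has probability $\mathbb{P}_x(Y(1)=z_1,\ldots,Y(n)=y)$, the chain exits $D_y$ exactly at the step after reaching $y$, and on this event $\sigma_y$ is the independent sum $\mathcal{E}_\pi=\sum_{z\in\pi}\mathcal{E}_z$ (which indeed includes the holding time at $y$); therefore $\mathbb{P}_x(\sigma_y>t\mid Y\text{ follows }\pi)=\mathbb{P}(\mathcal{E}_\pi\geq t)$.

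Finally I would sum over $\pi$. Distinct paths in $\Pi_{xy}$ prescribe distinct values of the jump chain at the first index where they differ, so the events $\{Y\text{ follows }\pi\}$ are pairwise disjoint; since $Y$ is strictly increasing it leaves $D_y$ after finitely many steps, and by the exit–through–$y$ property these events exhaust $\{\sigma_y>t\}$. Monotone convergence justifies exchanging the (nonnegative) series with the expectation, and collecting terms yields
$$
\mathbb{P}_x(X(t)\leq y)=\sum_{n\geq 0}\ \sum_{\substack{\pi\in\Pi_{xy}\\ l(\pi)=n}}\mathbb{P}_x(Y(1)=z_1,\ldots,Y(n)=y)\,\mathbb{P}(\mathcal{E}_\pi\geq t),
$$
which is the assertion. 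I expect the main obstacle to be precisely the structural step that the embedded chain exits $D_y$ only through $y$: for a general monotone chain on a graded poset the chain could jump from some $z<y$ directly to a state incomparable with $y$, and one must use the particular geometry of the poset (so that $y$ is the unique ``gate'' of $D_y$) to rule this out. Once that is secured, the remaining bookkeeping — conditional independence of the holding times given the jump–chain path, disjointness and exhaustion of the path events, and convergence of the series — is routine.
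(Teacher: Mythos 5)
Your proposal follows the same route as the paper's own proof: decompose over trajectories of the embedded chain, use monotonicity to see that the down--set $D_y=\{z : z\leq y\}$ can never be re--entered once left, and weight each trajectory by the tail of a sum of independent exponential holding times. The genuine gap is exactly the step you flag at the end and then assert away: the claim that the chain can exit $D_y$ only through $y$. This ``gate'' property is false in the setting of this paper, and it cannot be repaired. Here the states are configurations of $N$ particles, the covers are $z\lessdot z+\epsilon_j$, and from any $z\leq y$ with $z_j=y_j$ in some coordinate the chain jumps with positive probability to $z+\epsilon_j\not\leq y$. Such trajectories leave $D_y$ without ever visiting $y$, yet they spend positive time in $D_y$ and hence contribute to $\mathbb{P}_x(X(t)\leq y)$; they are entirely absent from the right--hand side of the proposition. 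The quickest way to see that the identity you are aiming at cannot hold as stated: set $t=0$. The left side is $1$, while the right side collapses to $\sum_{\pi\in\Pi_{xy}}\mathbb{P}_x(Y\text{ follows }\pi)=\mathbb{P}_{x,\mathcal{H}}(y)$, the hitting probability of $y$, which is generically strictly less than $1$ (for two particles with $x=(0,0)$, $y=(1,1)$ it equals $1/2$; the explicit rational functions in Section 7 of the paper are further examples). So your ``structural step'' is not a technicality to be secured but the precise point at which the statement, as written, fails.

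You should also know that the paper's own proof elides the same point: it claims that ``by monotonicity'' only infinite trajectories beginning with an element of $\Pi_{xy}$ carry positive probability, which is your gate property in different words and is equally unjustified --- monotonicity makes trajectories increasing, but does not force them through $y$. What is true, and what the paper actually uses downstream (in the proof of Proposition \ref{poset}(b)), is the pointwise statement
$$
\mathbb{P}_x(X(t)=y)=\sum_{\pi\in\Pi_{xy}}\mathbb{P}_x\bigl(Y(1)=z_1,\ldots,Y(d)=y\bigr)\,\mathbb{P}\bigl(\mathcal{E}_\pi-\mathcal{E}_y<t,\ \mathcal{E}_\pi\geq t\bigr),
$$
in which every contributing trajectory passes through $y$ by definition of the event; the order--interval version must then be summed over all possible endpoints, $\mathbb{P}_x(X(t)\leq y)=\sum_{w\leq y}\sum_{\pi\in\Pi_{xw}}\mathbb{P}_x(Y\text{ follows }\pi)\,\mathbb{P}\bigl(\mathcal{E}_\pi-\mathcal{E}_w<t\leq\mathcal{E}_\pi\bigr)$. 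Your exit--time framework proves either of these verbatim, with no gate property needed; recast your write--up around one of them rather than around the identity as printed.
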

\begin{proof}
Let $\mathcal{S}^{\infty}$ denote the set of all infinite paths in $\mathcal{S}$, irrespective of the jump rates or the partial ordering on $\mathcal{S}$. As before, let $\pi=(z_0,z_1,z_2,\ldots)$, where $z_n \in \mathcal{S}$ for all $n\geq 0$. Note that the discrete embedded Markov chain $Y(t)$ and the Markov process $X(t)$ can be coupled together naturally. Thus
$$
\mathbb{P}_x(X(t) \leq y) = \sum_{\pi \in \mathcal{S}^{\infty}} \mathbb{P}(Y(n)=z_n \text{ for all } n\geq 0) \cdot \mathbb{P}_x(X(t) \leq y  \vert Y(n)=z_n \text{ for all } n\geq 0)
$$
by the usual total probability rule. By monotonicity, the first probability $ \mathbb{P}(Y(n)=z_n \text{ for all } n\geq 0) $ is only nonzero for paths that begin with $\pi \in \Pi_{xy}$ where $l(\pi) :=n \geq 0$, so the summation can be replaced with the summation in the right--hand--side of the statement of the proposition.

For any state $y$, let $\mathbb{H}(z)$ denote the holding time of $X(t)$ at $z$. Then
$$
 \mathbb{P}_x(X(t) \leq y  \vert Y(n)=z_n \text{ for all } n \leq l(\pi)) =     \mathbb{P}\left( \sum_{k=0}^{\l(\pi)} \mathbb{H}(z_k) \geq t \right)
$$
Since $\{\mathbb{H}(z): z \in \mathcal{S}\} \stackrel{d}{=} \{\mathcal{E}_z: z\in \mathcal{S} \}$, this implies that $\mathcal{E}_\pi \stackrel{d}{=}  \sum_{k=0}^{\l(\pi)} \mathbb{H}(z_k)$, this completes the proof.

\end{proof}

Now suppose that $\mathcal{S}$ is a \textit{graded poset}. In other words, there is a rank function $\rho:\mathcal{S}\rightarrow \mathbb{N}$ such that
\begin{itemize}
\item
$\rho(x) < \rho(y)$ if $x<y$.

\item
Whenever $x \lessdot y$, the rank function satisfies $\rho(x)+1=\rho(y)$.

\end{itemize}
Recall that $x\lessdot y$ means that $x<y$ and that there is no element $z$ such that $x<z<y$. In words, $x$ \textit{ is covered }by $y$. 

\begin{definition} For a discrete--time Markov chain on a discrete state space define 
$$
\mathbb{P}_{\mathcal{H},x}(y) =\mathbb{P}\left( Y(n) = y \text{ for some } n | Y(0)=x\right).
$$
In words, $\mathbb{P}_{\mathcal{H},x}(y)$ is the probability that the Markov process $X(t)$ or the chain $Y(t)$ ever hits $y$ when started from $x$. (The $\mathcal{H}$ stands for hitting). For a continuous--time Markov chain $X(t)$, similarly define
$$
\mathbb{P}_{\mathcal{H},x}(y)  =\mathbb{P}\left( X(t) = y \text{ for some } n | X(0)=x\right).
$$
Note that if $X(t)$ has discrete--time embedded Markov chain $Y(n)$, then $\mathbb{P}_{\mathcal{H},x}(y)$ is the same measure for both processes.

\end{definition}

The next definition has likely occurred before, but the author was unable to find a citation.
\begin{definition}
A discrete--time  Markov chain $Y(n)$ is \textit{compatible} with the rank function $\rho$ if the transition probability $T(x,y)$ is nonzero if and only if $x\lessdot y$. A continuous--time Markov process $X(t)$ is \text{compatible} with the rank function $\rho$ if its discrete--time embedded Markov chain is compatible with $\rho$.
\end{definition}

Before continuing, here are a few properties of compatible Markov chains and processes that will be useful later. 
\begin{lemma}\label{Used}
Suppose that the discrete--time Markov chain $Y(n)$ on a graded poset is compatible with the rank function $\rho$. Then:

(a)
Almost surely $\rho(Y(n))=\rho(Y(0))+n\text{ for all } n \geq 0$.

(b) $\mathbb{P}_{x,\mathcal{H}}(\cdot)$ is a probability measure when restricted to $\{y \in \mathcal{S}: \rho(y)=\rho(x)+n\}$ for each fixed $n$.
\end{lemma}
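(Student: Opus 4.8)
The plan is to prove part (a) by a one-line induction and then extract part (b) as an essentially formal consequence of it. The key structural fact to exploit is that compatibility turns the rank into a deterministic clock.

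For part (a), first I would invoke compatibility directly: by definition the transition probability $T(x,y)$ is nonzero only when $x \lessdot y$. Since $\mathcal{S}$ is graded, the covering relation forces $\rho(y) = \rho(x)+1$. Hence every single step of the chain increases the rank by exactly one, almost surely. With the trivial base case at $n=0$ and induction on $n$, this yields $\rho(Y(n)) = \rho(Y(0)) + n$ for all $n \geq 0$ almost surely.

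For part (b), the central observation is that part (a) makes the time index and the rank interchangeable: the chain occupies rank $\rho(x)+n$ precisely at time $n$, and at no other time. Consequently, for any target $y$ with $\rho(y)=\rho(x)+n$, the hitting event $\{Y(m)=y \text{ for some } m\}$ coincides with the single-time event $\{Y(n)=y\}$, because the chain cannot sit at rank $\rho(x)+n$ at any other step. This identifies $\mathbb{P}_{x,\mathcal{H}}(y) = \mathbb{P}(Y(n)=y \mid Y(0)=x)$ on the level set. Summing and using part (a) once more,
$$
\sum_{y:\,\rho(y)=\rho(x)+n} \mathbb{P}_{x,\mathcal{H}}(y) = \sum_{y} \mathbb{P}(Y(n)=y \mid Y(0)=x) = \mathbb{P}\big(\rho(Y(n))=\rho(x)+n \mid Y(0)=x\big) = 1,
$$
where the middle step is legitimate because every $y$ outside the level set contributes zero, again by part (a). Non-negativity of each term is automatic, so $\mathbb{P}_{x,\mathcal{H}}(\cdot)$ restricts to a genuine probability measure on each level set.

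I do not anticipate a serious obstacle here, as both parts are short. The one point that genuinely requires gradedness — rather than being cosmetic — is the collapse of the hitting event to a fixed-time event in part (b): without the strict unit increase of the rank at every step, a state could in principle be reached at several times or revisited, and the clean reduction of the hitting probability to a one-step law would break down. I would therefore make sure to state explicitly that this identification is where compatibility with $\rho$ is used.
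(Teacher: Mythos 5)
Your proof is correct and is essentially the argument the paper intends: the paper's own proof is simply the remark that both parts ``follow immediately from the definitions,'' and what you have written is precisely that argument spelled out (unit rank increase per step by compatibility and gradedness, then the collapse of the hitting event to the fixed-time event $\{Y(n)=y\}$, which is exactly the identity $\mathbb{P}_{x,\mathcal{H}}(y)=\mathbb{P}_x(Y(n)=y)$ the paper later uses in the proof of Proposition \ref{poset}). Your closing observation about where gradedness is genuinely needed is accurate and makes the lemma's role in the subsequent induction clearer than the paper's one-line proof does.
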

\begin{proof}
These both follow immediately from the definitions.
\end{proof}

With this additional assumption that $\mathcal{S}$ is a graded poset and the compatibility assumption, the next proposition can be now stated. Roughly speaking, one can prove certain distributional identities by induction on the value of the rank function. In the context of this paper, which examines the continuous--time multi--species $q$--TAZRP, part (b) below will be most useful. Part (a), which holds for both discrete and continuous--time processes, is an intermediate step to prove part (b), for the purposes of this paper. However, it is certainly possible that future applications in different models will be able to use part (a) without using part (b).

\begin{prop}\label{poset}
Suppose that $X(t)$ and $Y(n)$ are compatible with the graded poset $\mathcal{S}$ with partial ordering $\leq$.

(a) Define an equivalence relation $\equiv$ on $\mathcal{S}\times \mathcal{S}$ by 
$$
(x,y) \equiv (x',y') \text{ if } \mathbb{P}_{x,\mathcal{H}}(y) =  \mathbb{P}_{x',\mathcal{H}}(y') \text{ and } \rho(x)-\rho(y) = \rho(x')-\rho(y').
$$
Let $E^{\equiv}  $ denote the subset of $(\mathcal{S} \times \mathcal{S})^2$ which represents $\equiv$, in other words
$$
((x,y),(x',y')) \in E^{\equiv}  \text{ if and only if } (x,y) \equiv (x',y')  
$$
For any $y \in \mathcal{S}$, let $\mathcal{C}(y) \subset \mathcal{S}$ be
$$
\mathcal{C}(y)  = \{ z: z\lessdot y\}.
$$
In other words, $\mathcal{C}(y)$ is the set of all states that can possibly occur the time--step before the Markov chain hits $y$.

Then $E^{\equiv}$ satisfies the following ``inductive property'': 
\begin{itemize}
\item
If $((x,y),(x',y')) \in (\mathcal{S} \times \mathcal{S})^2$ satisfies the ``inductive step'' that there exists a bijection $\iota:\mathcal{C}(y) \rightarrow \mathcal{C}'(y)$ such that 
$$
T(z,y) = T(\iota(z),y') \text{ for all } z \in \mathcal{C}(y),\\
$$
and 
\item
if $E^{\equiv}$ satisfies as well the ``inductive hypothesis'' that 
$$
((x,z),(x',\iota(z)) \in E^{\equiv} \text{ for all } z \in \mathcal{C}(y),
$$
\end{itemize}
then $((x,y),(x',y'))$ is also in $E^{\equiv}$.

(b) Assume that all the conditions in part (a) still hold.
Define another equivalence relation $\sim$ on $\mathcal{S}\times \mathcal{S}$ by 
\begin{align*}
(x,y) \sim (x',y') \text{ if } \mathbb{P}_x(X(t) \leq y) &= \mathbb{P}_{x'}(X(t) \leq y') \text{ for all } t \geq 0\\
  \text{ and } \mathbb{P}_x(X(t) = y)   &= \mathbb{P}_{x'}(X(t) = y') \text{ for all } t \geq 0\\
 \text{ and } \rho(x)-\rho(y) &= \rho(x')-\rho(y')
\end{align*}
Let $E^{\sim}  $ denote the subset of $(\mathcal{S} \times \mathcal{S})^2$ which represents $\sim$, in other words
$$
((x,y),(x',y')) \in E^{\sim} \text{ if } (x,y) \sim (x',y') .
$$

Then $E^{\sim}$ satisfies the following ``inductive property'':
\begin{itemize}
\item
If the bijection from part (a) also satisfies
$$
L(z,z) = L(\iota(z),\iota(z)), \text{ for all } z \in \mathcal{C}(y),
$$
and furthermore, the value of $L(y,y)=L(y',y')$, and:
\item
if $E^{\sim}$ satisfies as well the ``inductive hypothesis'' that 
$$
((x,z),(x',\iota(z)) \in E^{\sim} \text{ for all } z \in \mathcal{C}(y);
$$
then $((x,y),(x',y'))$ is also in $E^{\sim}$ as long as $ \rho(x)-\rho(y) = \rho(x')-\rho(y')$.

\end{itemize}
\end{prop}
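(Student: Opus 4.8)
The plan is to prove both parts as genuine \emph{inductive steps}: assuming the stated equivalence holds at every penultimate state $z\in\mathcal{C}(y)$ (paired through $\iota$ with $\iota(z)\in\mathcal{C}(y')$), I deduce the equivalence at the pair $(y,y')$ by conditioning on the last jump into $y$. The structural fact that makes this work is Lemma \ref{Used}: since $Y(n)$ is compatible with $\rho$, a trajectory that hits $y$ occupies a state of rank $\rho(y)-1$ exactly one step earlier, and that state must lie in $\mathcal{C}(y)=\{z:z\lessdot y\}$; moreover, once the chain has passed rank $\rho(y)$ it never returns, so reaching $y$ requires the jump into $y$ to be taken precisely at the unique visit to rank $\rho(y)-1$. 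Thus the hitting measure, the quantity $\mathbb{P}_x(X(t)\le y)$, and the point mass $\mathbb{P}_x(X(t)=y)$ can all be reconstructed from the corresponding quantities at the covers of $y$.

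For part (a), first-step analysis at the final jump gives the backward recursion $\mathbb{P}_{x,\mathcal{H}}(y)=\sum_{z\in\mathcal{C}(y)}\mathbb{P}_{x,\mathcal{H}}(z)\,T(z,y)$, which is legitimate because Lemma \ref{Used}(b) makes $\mathbb{P}_{x,\mathcal{H}}(\cdot)$ a genuine distribution on each rank level, so the events ``the last state visited below $y$ is $z$'' partition the event ``$y$ is hit''. The inductive hypothesis $((x,z),(x',\iota(z)))\in E^{\equiv}$ gives $\mathbb{P}_{x,\mathcal{H}}(z)=\mathbb{P}_{x',\mathcal{H}}(\iota(z))$, and the inductive step gives $T(z,y)=T(\iota(z),y')$; substituting and reindexing through the bijection $\iota:\mathcal{C}(y)\to\mathcal{C}(y')$ turns the right-hand side into $\sum_{z'\in\mathcal{C}(y')}\mathbb{P}_{x',\mathcal{H}}(z')\,T(z',y')=\mathbb{P}_{x',\mathcal{H}}(y')$. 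The rank identity $\rho(x)-\rho(y)=\rho(x')-\rho(y')$ follows from the hypothesis together with $\rho(z)=\rho(y)-1$ and $\rho(\iota(z))=\rho(y')-1$, so $((x,y),(x',y'))\in E^{\equiv}$.

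For part (b) I incorporate the independent holding times through Proposition \ref{old}. Splitting each path $\pi\in\Pi_{xy}$ at its penultimate vertex $z$ and using that $\mathcal{E}_y$ is independent of the holding times along the initial segment yields, after conditioning on $\mathcal{E}_y=u$, the convolution identity $\mathbb{P}_x(X(t)\le y)=\sum_{z\in\mathcal{C}(y)}T(z,y)\int_0^\infty \mathbb{P}_x(X(t-u)\le z)\,f_y(u)\,du$, where $f_y$ is the $\mathrm{Exp}(-L(y,y))$ holding-time density and, for $t-u<0$, the integrand is read as the hitting probability $\mathbb{P}_{x,\mathcal{H}}(z)$. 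For the point mass, conditioning on the time and location of the last jump into $y$ gives the renewal equation $\mathbb{P}_x(X(t)=y)=\sum_{z\in\mathcal{C}(y)}L(z,y)\int_0^t \mathbb{P}_x(X(s)=z)\,e^{L(y,y)(t-s)}\,ds$ with $L(z,y)=-L(z,z)T(z,y)$. I then substitute the three matching hypotheses: the $E^{\sim}$ inductive hypothesis $\mathbb{P}_x(X(\cdot)\le z)=\mathbb{P}_{x'}(X(\cdot)\le\iota(z))$ and $\mathbb{P}_x(X(\cdot)=z)=\mathbb{P}_{x'}(X(\cdot)=\iota(z))$; the equalities $L(z,z)=L(\iota(z),\iota(z))$ and $T(z,y)=T(\iota(z),y')$, which match both $L(z,y)$ and the densities; and $L(y,y)=L(y',y')$, which matches $f_y$ and $e^{L(y,y)(t-s)}$. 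The one term not covered by $E^{\sim}$ is the $t-u<0$ tail of the CDF integral, carrying the factor $\mathbb{P}_{x,\mathcal{H}}(z)$; this is matched by part (a) (equivalently by $\mathbb{P}_{x,\mathcal{H}}(z)=-L(z,z)\int_0^\infty\mathbb{P}_x(X(t)=z)\,dt$, which the $E^{\sim}$ and rate hypotheses already equate). Reindexing both sums by $\iota$ produces the identical expressions for $y'$, and the rank identity follows as in part (a).

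I expect the main obstacle to be the renewal/convolution decomposition for $\mathbb{P}_x(X(t)\le y)$ in part (b): one must cleanly isolate the independent holding time $\mathcal{E}_y$ from the path statistics, verify that Proposition \ref{old} applies to each truncated segment, and then handle the regime $t-u<0$, where the integrand degenerates from a CDF-type quantity into the full hitting probability $\mathbb{P}_{x,\mathcal{H}}(z)$. It is precisely this tail that is \emph{not} controlled by the continuous-time inductive hypothesis alone and that forces the appeal to part (a); recognizing and discharging this dependence is the crux. The remaining manipulations---substituting the matching hypotheses and reindexing through $\iota$---are routine bookkeeping.
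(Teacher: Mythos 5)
Your part (a) is the paper's own argument: compatibility forces a unique visit to rank $\rho(y)-1$, your backward recursion $\mathbb{P}_{x,\mathcal{H}}(y)=\sum_{z\in\mathcal{C}(y)}\mathbb{P}_{x,\mathcal{H}}(z)\,T(z,y)$ is exactly the paper's sum $\sum_{z}\mathbb{P}_x(Y(n-1)=z)T(z,y)$, and the reindexing through $\iota$ is identical.

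Part (b) is correct but runs in the opposite direction from the paper, and the technical tools differ. The paper first collapses the CDF statement to the point-mass statement in one line, via the claimed event identity $\{X(t)\le y\}=\{X(t)=y\}\cup\coprod_{z\in\mathcal{C}(y)}\{X(t)\le z\}$, and then labors over $\mathbb{P}_x(X(t)=y)$: Proposition \ref{old}, an integration by parts, a re-summation into hitting-time quantities $\mathbb{P}_x(\tau_z+\mathcal{E}_z\le t)$, and a final Riemann-sum discretization to extract $\mathbb{P}_x(\tau_z<s)=\mathbb{P}_{x'}(\tau_{\iota(z)}<s)$ from the induction hypothesis. You instead prove the point-mass equality outright from the first-entrance integral equation $\mathbb{P}_x(X(t)=y)=\sum_{z\in\mathcal{C}(y)}L(z,y)\int_0^t\mathbb{P}_x(X(s)=z)\,e^{L(y,y)(t-s)}\,ds$ (valid because, under compatibility, increasing trajectories enter $y$ at most once), into which $L(z,y)=-L(z,z)T(z,y)$, the rate hypotheses, and the $E^{\sim}$ hypothesis substitute term by term --- no hitting times, no discretization; you then treat the CDF by convolving Proposition \ref{old} against the density of $\mathcal{E}_y$, correctly isolating the $u>t$ tail, which carries $\mathbb{P}_{x,\mathcal{H}}(z)$, as the one term the continuous-time hypothesis does not control. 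Both of your ways of discharging that tail are legitimate: the hypotheses of part (a) are explicitly assumed in part (b), and the occupation-time identity $\mathbb{P}_{x,\mathcal{H}}(z)=-L(z,z)\int_0^{\infty}\mathbb{P}_x(X(t)=z)\,dt$ holds because $z$ is visited at most once. As for what each route buys: the paper's CDF reduction is shorter, but its union over covers is not actually disjoint whenever a state lies below two distinct covers of $y$ (which does happen in the $q$--TAZRP poset), whereas your convolution identity carries the correct weights $T(z,y)$ and involves no double count, so your longer CDF step is on firmer ground; and your renewal-equation treatment of the point mass is both more elementary and more robust than the paper's hitting-time argument. The residual weakness is shared: your CDF step, like the paper's point-mass step, still rests on Proposition \ref{old} and monotonicity, so neither route escapes that dependence.
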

\begin{proof}
(a)
Proving that $((x,y),(x',y')) \in E^{\equiv}$ entails proving that $\rho(x)-\rho(y)=\rho(x')-\rho(y')$ and also that $ \mathbb{P}_{x,\mathcal{H}}(y) =  \mathbb{P}_{x',\mathcal{H}}(y')$. The former is easier to prove. To start, let  $n=\rho(y)-\rho(x)$.   But then $n=\rho(y')-\rho(x')$ is true as well because of Lemma \ref{Used}. Thus it remains to prove the latter statement.

Because of the compatibility criterion (Lemma \ref{Used}),
$$
 \mathbb{P}_{x,\mathcal{H}}(y) = \mathbb{P}_{x}(Y(n)=y), \quad  \mathbb{P}_{x',\mathcal{H}}(y') = \mathbb{P}_{x'}(Y(n)=y').
$$
Thus
\begin{align*}
 \mathbb{P}_{x,\mathcal{H}}(y) &= \sum_{z \in \mathcal{C}(y)} \mathbb{P}_{x}(Y(n-1)=z) \mathbb{P}_x(Y(n)=y \vert Y(n-1)=z)\\
 &=  \sum_{z \in \mathcal{C}(y)} \mathbb{P}_{x}(Y(n-1)=z) T(z,y).
\end{align*}
Likewise, 
$$
 \mathbb{P}_{x',\mathcal{H}}(y') =  \sum_{z' \in \mathcal{C}(y)} \mathbb{P}_{x'}(Y(n-1)=z') T(z',y').
$$
To show that these two quantities are equal, we will use the inductive step to match the $T$ terms and the inductive hypothesis to match the $\mathbb{P}$ terms.

Using the bijection $\iota$, the last equation can be rewritten
$$
 \mathbb{P}_{x',\mathcal{H}}(y') =  \sum_{z \in \mathcal{C}(y)} \mathbb{P}_{x'}(Y(n-1)=\iota(z)) T(\iota(z),y').
$$
By the inductive step $T(\iota(z),y') =  T(z,x')$; and by the inductive hypothesis $ \mathbb{P}_{x'}(Y(n-1)=\iota(z))=  \mathbb{P}_{x}(Y(n-1)=z)$. So thus
$$
 \mathbb{P}_{x',\mathcal{H}}(y') =  \sum_{z' \in \mathcal{C}(y)} \mathbb{P}_{x'}(Y(n-1)=z)T(z,y)  =  \mathbb{P}_{x,\mathcal{H}}(y) .
$$
This proves (a).

(b) For the continuous--time case, the argument is slightly more involved because the time steps no longer exactly line up with the rank function $\rho$. The equality $\rho(y)-\rho(x)=\rho(y')-\rho(x')$ follows again from the Lemma \ref{Used}, as in part (a).  Let $d=\rho(y)-\rho(x)=\rho(y')-\rho(x')$. 

There is the identity of events
$$
\{X(t) \leq y\} = \{X(t) = y\} \cup  \coprod_{z \in \mathcal{C}(y)} \{X(t) \leq z\},
$$
so by countable additivity
$$
\mathbb{P}_x(X(t) \leq y) = \mathbb{P}_x(X(t)=y) + \sum_{z \in \mathcal{C}(y)} \mathbb{P}_x(X(t) \leq z).
$$
Similarly,
$$
\mathbb{P}_{x'}(X(t) \leq y') = \mathbb{P}_{x'}(X(t)=y') + \sum_{z' \in \mathcal{C}(y')} \mathbb{P}_{x'}(X(t) \leq z').
$$
By the inductive hypothesis, the $\Sigma$ terms are equal, so it suffices to show
$$
\mathbb{P}_x(X(t)=y)  = \mathbb{P}_{x'}(X(t)=y') 
$$

Now, the left--hand--side can be written as (Proposition \ref{old})
$$
\mathbb{P}_x(X(t) = y) = \sum_{\pi \in \Pi_{xy}} \mathbb{P}_x(Y(1)=z_1,\ldots,Y_{d-1}=z_{d-1},Y_d=y) \mathbb{P}(\mathcal{E}_{x}+ \mathcal{E}_{z_1} + \ldots \mathcal{E}_{z_{d-1}}<t, \mathcal{E}_{\pi} \geq t),
$$
since the holding times $\mathcal{E}_{\cdot}$ are independent of the embedded Markov chain $Y(n)$. First, let us rewrite the term depending on $\mathcal{E}$'s. To save space, let $\vec{\mathcal{E}}= \mathcal{E}_x+\ldots + \mathcal{E}_{z_{d-1}}$. Since $\vec{\mathcal{E}}$ and $\mathcal{E}_{y}$ are independent, and $\mathcal{E}_{\pi} = \vec{\mathcal{E}} + \mathcal{E}_y$, this term can be rewritten as
\begin{align*}
\mathbb{P}(\vec{\mathcal{E}}<t, \vec{\mathcal{E}}+\mathcal{E}_y>t) &= \int_0^t \mathbb{P}(\vec{\mathcal{E}}<t, \vec{\mathcal{E}} + {\mathcal{E}}_y>t \vert \vec{\mathcal{E}}=s) f_{\vec{\mathcal{E}}}(s)ds \\
&= \int_0^t \mathbb{P}(\mathcal{E}_y>t-s) f_{\vec{\mathcal{E}}}(s)ds\\
&=\int_0^t e^{-\lambda_y(t-s)} f_{\vec{\mathcal{E}}}(s)ds
\end{align*}
where the notation $f_{A}(s)$ denotes the probability distribution function of a random variable $A$.
Now by integration by parts,
\begin{align*}
\mathbb{P}(\vec{\mathcal{E}}<t, \vec{\mathcal{E}}+\mathcal{E}_y>t)&=   e^{-\lambda_yt}  \left(  e^{\lambda_y s}\mathbb{P}( \vec{\mathcal{E}} \leq s) \Big|_{s=0}^t - \int_0^t   \lambda_y e^{\lambda_ys}\mathbb{P}( \vec{\mathcal{E}} \leq s)ds \right)\\
&= \mathbb{P}(\vec{\mathcal{E}} \leq t) - e^{-\lambda_y t} \int_0^t   \lambda_y e^{\lambda_ys}\mathbb{P}( \vec{\mathcal{E}} \leq s)ds.
\end{align*}
Therefore, plugging this back in to the equation above,
\begin{multline*}
\mathbb{P}_x(X(t) = y) = \sum_{\pi \in \Pi_{xy}} \mathbb{P}_x(Y(1)=z_1,\ldots,Y_{d-1}=z_{d-1},Y_d=y)\\ \times \left( \mathbb{P}(\mathcal{E}_{\pi} \leq t) - e^{-\lambda_y t} \int_0^t   \lambda_y e^{\lambda_ys}\mathbb{P}( {\mathcal{E}}_{\pi} \leq s)ds\right).\end{multline*}
Now rewriting the term that depends on $Y's$,
\begin{multline*}
\mathbb{P}_x(X(t) = y) = \sum_{z \in \mathcal{C}(y)} \sum_{\pi \in \Pi_{xz}} \mathbb{P}_x(Y(1)=z_1,\ldots,Y(d-2)=z_{d-2},Y_{d-1}=z) T(z,y)  \\
\times \left( \mathbb{P}(\mathcal{E}_{\pi} \leq t) - e^{-\lambda_y t} \int_0^t   \lambda_y e^{\lambda_ys}\mathbb{P}( {\mathcal{E}}_{\pi} \leq s)ds\right).
\end{multline*}
This then becomes
$$
\mathbb{P}_x(X(t) = y) = \sum_{z \in \mathcal{C}(y)} T(z,y)   \sum_{\pi \in \Pi_{xz}} \mathbb{P}_{\mathcal{H},x}(z) 
 \left( \mathbb{P}(\mathcal{E}_{\pi} \leq t) - e^{-\lambda_y t} \int_0^t   \lambda_y e^{\lambda_ys}\mathbb{P}( {\mathcal{E}}_{\pi} \leq s)ds\right).
$$
Now, note that
$$
\sum_{\pi \in \Pi_{xz}} \mathbb{P}_{\mathcal{H},x}(z) \mathbb{P}(\mathcal{E}_{\pi} \leq t) = \mathbb{P}_x(\tau_z + \mathcal{E}_z <t)
$$
where $\tau_z$ is the first hitting time at the state $z$, since both sides equal the probability that the process hits $z$ and then leaves $z$ before time $t$. Therefore
$$
\mathbb{P}_x(X(t) = y) = \sum_{z \in \mathcal{C}(y)} T(z,y)  
 \left( \mathbb{P}_x(\tau_z+\mathcal{E}_{z} \leq t) - e^{-\lambda_y t} \int_0^t   \lambda_y e^{\lambda_ys}\mathbb{P}_x( \tau_z + \mathcal{E}_z\leq s)ds\right).
$$
By an identical calculation,
$$
\mathbb{P}_{x'}(X(t) = y') = \sum_{z' \in \mathcal{C}(y')} T(z',y')  
 \left( \mathbb{P}_{x'}(\tau_{z'}+\mathcal{E}_{z'} \leq t) - e^{-\lambda_{y'} t} \int_0^t   \lambda_{y'} e^{\lambda_{y'}s}\mathbb{P}_{x'}( \tau_{z'} + \mathcal{E}_{z'}\leq s)ds\right).
$$
By the additional assumptions on the bijection $\iota$, we can insert $\lambda_y=\lambda_{y'}$, $ \mathcal{E}_z \stackrel{d}{=} \mathcal{E}_{\iota(z)}$ and $T(\iota(z),y')=T(z,y)$, so that the proof is completed once it is shown that
$$
\mathbb{P}_x(\tau_z<s) = \mathbb{P}_{x'}(\tau_{z'}<s) 
$$
for all $s\geq 0$. Here $z'=\iota(z)$ to save space in notation.

This final identity can be seen to follow from the inductive hypothesis. We know that$$
\mathbb{P}_x(X(t) \leq z) = \mathbb{P}_{x'}(X(t) \leq z') $$
To use this, note that for all $z \in \mathcal{C}(y)$,
\begin{align*}
\mathbb{P}_x(\tau_z<s)  &=   \lim_{m\rightarrow \infty} \sum_{a=0}^{m-1} \mathbb{P}_x\left(  \tau_z \in [as/m,(a+1)s/m) \right)\\
&=\lim_{m\rightarrow \infty} \sum_{a=0}^{m-1} \mathbb{P}_x\left(X\left(\frac{a}{m}s\right) = z, X\left(\frac{a+1}{m}s\right) \neq z\right)\\
&=  \lim_{m\rightarrow \infty} \sum_{a=0}^{m-1} \mathbb{P}_x\left(X\left(\frac{a}{m}s\right) = z\right)(1-e^{-\lambda_z/m} + O(m^{-2})),
\end{align*}
and a similar expression holds for $ \mathbb{P}_{x'}(\tau_{z'}<s) $. Since, by the induction hypothesis, $\mathbb{P}_x\left(X\left(\frac{a}{m}s\right) = z\right) = \mathbb{P}_{x'}\left(X\left(\frac{a}{m}s\right) = z'\right)$ for all $z\in \mathcal{C}(y)$, we conclude that $\mathbb{P}_x(\tau_z<s) = \mathbb{P}_{x'}(\tau_z'<s)$. This finishes the proof.
\end{proof}

\subsection{Proof of Theorem \ref{Thm}}
By Proposition \ref{Dual}, the $n$th $q$--moments of the multi--species $q$--TAZRP with infinitely many particles can be expressed in terms of the distributions of the $n$--particle system. The theorem in the previous section will be applied to these distributions, as described below.
 
The multi--species $q$--TAZRP fits into the general set-up of a stochastic Markov process on a graded poset in the last section. Let $X(t)$ denote the multi--species $q$--TAZRP and $Y(n)$ its discrete--time embedded Markov chain. Define a partial order $<$ on the state space so that $\mathbb{P}(Y(1) = y \vert Y(0)=x)$ is nonzero if and only if $x\lessdot y$. Let $\rho(\cdot)$ be the rank function that is compatible with the Markov process, and normalized so that $\rho((0,\ldots,0))=0$. In general, $\rho((x_1,\ldots,x_n))=x_1+\ldots + x_n$.

Now to proceed with the proof of the main theorem. After applying Proposition \ref{Dual}, the suffices to prove that, assuimng the intersection numbers match,
$$
\mathbb{P}_x(X(t) \leq y) = \mathbb{P}_{x'}(X(t) \leq y'),
$$
where $X(t)$ is the multi--species $q$--TAZRP and  $x=(x_1,\ldots,x_n),y=(y_1,\ldots,y_n),x'=(x_1',\ldots,x_n'),y'=(y_1',\ldots,y_n')$. A proof of this identity will be split up into two cases. The reason for requiring two cases is related to the state space of the multi--species $q$--TAZRP, in which multiple particles can occupy a site. The first case is when the $j_0$ in Theorem \ref{Thm} represents a particle which does not share a site with another particle, and the second case is when it does share a site with another particle. Theorem \ref{Thm}(b) will be applied, but actually only to the first case; the second case requires explicit contour integral formulas. 

\subsubsection{The first case: applying Proposition \ref{poset}}
Suppose that the set $\{j\in [1,n]:y_j=y_{\epsilon_{j_0}}\}$ only has one element. In words, this means that to obtain $y'$ from $y$, the $j_0$th--species particle is moved, and it does not share a lattice site with any other particle. In this case, the necessary bijection $\iota:\mathcal{C}(y) \rightarrow \mathcal{C}'(y)$ such that 
$$
T(z,y) = T(\iota(z),y') \text{ and } L(z,z) = L(\iota(z),\iota(z)) \text{ for all } z \in \mathcal{C}(y)
$$
can be constructed explicitly. More specifically, if $z=(z_1,\ldots,z_n) \in \mathcal{C}(y)$ then $y-z=\epsilon_j$ for some $1\leq j\leq n$, noting that $j$ depends on $z$. Define $\iota(z)$ by $\iota(z) = y'-\epsilon_j$ for all $z\in \mathcal{C}(y)$. By the definition of the jump rates of the multi--species $q$--TAZRP, the above identity holds, completing the proof in this case.

\begin{remark}
In the general case, computer and numerical simulations show that  $\mathbb{P}_{x,\mathcal{H}}(y)$ does \textit{not} only depend on the intersection numbers $I(x,y)$. 
\end{remark}

\subsubsection{The second case: Contour integral formulas for the transition probabilities}
At this point, the reader may have noticed that neither the Markov projection property nor the contour integral formulas have been used yet. It is at this step that both of these important ingredients are used in the proof. 

Although this section does not explicitly Proposition \ref{poset}, the proof will be set up in the general framework of a monotone Markov process on a poset and an induction argument on the rank function. In particular, the induction will be on the value of $d=\rho(y)-\rho(x)$. When $d=0$, the theorem holds because the intersection numbers are equal. Now assume that the theorem holds for all $x,y,x',y'$ such that $\rho(y)-\rho(x)=\rho(y')-\rho(x')<d$. By Proposition \ref{Dual}, this assumption implies that
$$
\mathbb{P}_x(X(t) \leq y) = \mathbb{P}_{x'}(X(t) \leq y') 
$$
for all $t\geq 0$ whenever $I(x,y)=I(x',y')$ and $\rho(y)-\rho(x)=\rho(y')-\rho(x')<d$. The next step is to manipulation the equality on the last line to obtain contour integral formulas (Proposition \ref{AHPThm2}).

We have, for general monotone Markov processes on posets, that the left--hand--side equals
\begin{align*}
\mathbb{P}_x(X(t) \leq y) &=  \mathbb{P}_x(X(t) \leq y \vert X(t)\neq x)\mathbb{P}(X(t) \neq x )  + \mathbb{P}_x(X(t) \leq y \vert X(t)=x) \mathbb{P}(X(t)=x ) \notag \\
&= \sum_{\{z: x \lessdot z\}} \mathbb{P}_x(Y(1)=z)\int_0^t \mathbb{P}_z(X(t-s) \leq y) \lambda_x e^{-\lambda_x s}ds + \int_t^{\infty} \lambda_xe^{-\lambda_x s} ds
\end{align*}
Suppose that the value of the lattice site $x_{j_0}$ occurs a total or $r$ times in $(x_1,\ldots,x_n)$. In other words, there are $r$ particles located at $x_{j_0}$. Split the sum over $\{z: x \lessdot z\}$ into two sums: the first where $z$ differs from $x$ at one of those $r$ particles, and the second is the remaining $z$. The first sum can be collapsed into a single term by the Markov projection property.  The $z$ in the first sum (call this set $S_1$) can be indexed as $z(1), \ldots, z(r)$ such that 
$$
\mathbb{P}_x(Y(1)=z(r)) = q^r \mathbb{P}_x(Y(1)=z(1)); 
$$
this follows from the definition of the jump rates. Here, we are also using the Markov projection property, which tells us that it suffices to consider the case when there is only one particle of each species. Then, we can consider $\mathbb{P}_{\text{q-exc}}(X(t-s)\leq y)$ where q--exc indicates random initial conditions supported on $z(1),\ldots,z(r)$ which are $q$--exchangeable (i.e. the probability of starting in $z(k)$ is equal to $q^{k-1}/[r]_q$. By \cite{KuanAHP}, the dynamics of multi--species $q$--TAZRP preserve $q$--exchangeability (this is proved in the proof of Lemma 3.5 of \cite{KuanAHP}). This means that the sum over $z(1), \ldots, z(r)$ can be replaced with a single term and the induction hypothesis can be applied. In other words, letting $S$ denote the
$$
S= \{x: x\lessdot z\} - \{z(2), \ldots, z(r)\},
$$
we have
\begin{align}
\mathbb{P}_x(X(t) \leq y) &=  \mathbb{P}_x(X(t) \leq y \vert X(t)\neq x)\mathbb{P}(X(t) \neq x )  + \mathbb{P}_x(X(t) \leq y \vert X(t)=x) \mathbb{P}(X(t)=x ) \notag \\
&= \sum_{z\in S} \mathbb{P}_x(Y(1)=z)\int_0^t \mathbb{P}_z(X(t-s) \leq y) \lambda_x e^{-\lambda_x s}ds + \int_t^{\infty} \lambda_xe^{-\lambda_x s} ds \label{SecondTerm}
\end{align}

For now, focus on the sum over $z$. This sum can be expressed, using the induction hypothesis, as
\begin{multline*}
\sum_{z\in S} \mathbb{P}_x(Y(1)=z)\int_0^t \mathbb{P}_z(X(t-s) \leq y) \lambda_x e^{-\lambda_x s}ds \\
=\sum_{z\in S} \mathbb{P}_x(Y(1)=z)\int_0^t  q^{N(N-1)/2}  \left( \frac{-1}{2\pi i}\right)^N \lambda_x e^{-\lambda_x s}ds\\
\times \int \cdots \int dw_1 \cdots dw_n B(w_1,\ldots,w_N) \prod_{j=1}^N \frac{ (1-w_j)^{-M_j(z)-1}}{w_j} e^{-(w_1+\cdots + w_N)(t-s)}.
\end{multline*}
Here, the integers $M_j$ depend on $z$, so we write $M_j(z)$ to emphasize this dependence, and also to point out that no other term in the integrand depends on $z$. The $M_j(z)$ also depend on $y$ but that notation will not be included.
The integral over $t$ simplifies, so that the right--hand--side above equals
\begin{multline*}
\lambda_x \sum_{z\in S} \mathbb{P}_x(Y(1)=z)  q^{N(N-1)/2}  \left( \frac{-1}{2\pi i}\right)^N \int \cdots \int dw_1 \cdots dw_n  e^{-(w_1+\ldots+w_N)t} \frac{e^{t(w_1+\ldots+w_N-\lambda_x)} - 1} {w_1+\ldots+w_N-\lambda_x}\\
\times B(w_1,\ldots,w_N) \prod_{j=1}^N \frac{ (1-w_j)^{-M_j(z)-1}}{w_j} .
\end{multline*}
This expression is analytic in $t=0$, so to compute it, it suffices to calculate its $n$th derivative $\mathcal{D}_n$ with respect to $t$ at $t=0$. The $n$th derivative will be related to the $(n-1)$th derivative, and an induction argument on $n$ will be used. The $n$th derivative of the previous equation equals 
\begin{multline}\label{DefnD}
\mathcal{D}_n(x,y): =\lambda_x \sum_{z\in S} \mathbb{P}_x(Y(1)=z)  q^{N(N-1)/2} \left( \frac{-1}{2\pi i}\right)^N  \int \cdots \int dw_1 \cdots dw_N \\
\times \frac{(-\lambda_x)^n - (-1)^n(w_1+\ldots+w_N)^n} {w_1+\ldots+w_N-\lambda_x}\frac{1}{n!}
 B(w_1,\ldots,w_N) \prod_{j=1}^N \frac{ (1-w_j)^{-M_j(z)-1}}{w_j} .
\end{multline}
To write $\mathcal{D}_{n}$ in terms of $\mathcal{D}_{n-1}$, use the identity
$$
\frac{a^n-b^n}{a-b} = a^{n-1} + a^{n-2}b + \ldots ab^{n-2}+b^{n-1} = a^{n-1} + b\frac{a^{n-1}-b^{n-1}}{a-b}
$$
with $a=\lambda_x$ and $b=w_1+\ldots+w_N$. So set $\mathcal{D}_n = \mathcal{D}_{n}^{(1)} + \mathcal{D}_{n}^{(2)}$ where
\begin{multline*}
\mathcal{D}_n^{(1)}(x,y): =-\lambda_x \sum_{z\in S} \mathbb{P}_x(Y(1)=z)  q^{N-1}q^{N-2} \cdots q^K \left( \frac{-1}{2\pi i}\right)^N  \int \cdots \int dw_1 \cdots dw_N \lambda_x^{n-1} \frac{(-1)^n}{n!}\\
\times B(w_1,\ldots,w_N) \prod_{j=1}^N \frac{ (1-w_j)^{-M_j(z)-1}}{w_j} .
\end{multline*}
Then  $\mathcal{D}_{n}^{(1)} $  simplifies as
$$
\mathcal{D}_{n}^{(1)} (x,y):  = -\lambda_x^n\frac{(-1)^n}{n!} \sum_{z\in S} \mathbb{P}_x(Y(1)=z) \mathbb{P}_x(X(0)\leq y) = -\lambda_x^n\frac{(-1)^n}{n!}
$$
But 
$$\frac{d^n }{dt^n} \Big|_{t=0} \int_t^{\infty} \lambda_xe^{-\lambda_xs}ds = \frac{d^n }{dt^n} \Big|_{t=0} e^{-\lambda_x t} = (-1)^n \frac{\lambda_x^n}{n!} $$ 
so that $\mathcal{D}_n^{(1)}$ cancels with the $n$th derivative of the second term in \eqref{SecondTerm}. In other words, 

$$
\frac{d^n }{dt^n} \Big|_{t=0} \mathbb{P}_x(X(t) \leq y)  = \mathcal{D}_n^{(2)}(x,y).
$$

Using that $\mathcal{D}_n - \mathcal{D}_{n}^{(1)} = \mathcal{D}_{n}^{(2)}$,
\begin{multline}\label{Dn2}
 \mathcal{D}_{n}^{(2)}(x,y):  = -\lambda_x \sum_{z\in S} \mathbb{P}_x(Y(1)=z)  q^{N(N-1)/2} \left( \frac{-1}{2\pi i}\right)^N  \int \cdots \int dw_1 \cdots dw_N  (w_1+\ldots+w_N)\\
 \times\frac{\lambda_x^{n-1}-(w_1+\ldots+w_N)^{n-1}}{ \lambda_x - (w_1+\ldots+w_N) }\frac{(-1)^n}{n!} B(w_1,\ldots,w_N) \prod_{j=1}^N \frac{ (1-w_j)^{-M_j(z)-1}}{w_j} .
\end{multline}
As this point, the contours themselves are relevant. The claim is that for each $ 1 \leq j \leq k \leq N$ such that $M_j(z)=M_{k}$, 
\begin{multline}\label{qRepl}
 \int \cdots \int dw_1 \cdots dw_N  w_k \frac{\lambda_x^{n-1}-(w_1+\ldots+w_N)^{n-1}}{ \lambda_x - (w_1+\ldots+w_N) }\frac{(-1)^n}{n!} B(w_1,\ldots,w_N) \prod_{j=1}^N \frac{ (1-w_j)^{-M_j(z)-1}}{w_j} \\
 =  q^{j-k}\int \cdots \int dw_1 \cdots dw_N  w_j \frac{\lambda_x^{n-1}-(w_1+\ldots+w_N)^{n-1}}{ \lambda_x - (w_1+\ldots+w_N) }\frac{(-1)^n}{n!} B(w_1,\ldots,w_N) \prod_{j=1}^N \frac{ (1-w_j)^{-M_j(z)-1}}{w_j} 
\end{multline}
To see this, it suffices to consider when $j=k-1$, as the general case follows by induction. Then the goal is to prove that
$$
 \int \cdots \int dw_1 \cdots dw_N  (w_k -q^{-1}w_{k-1}) \frac{\lambda_x^{n-1}-(w_1+\ldots+w_N)^{n-1}}{ \lambda_x - (w_1+\ldots+w_N) }\frac{(-1)^n}{n!} B(w_1,\ldots,w_N) \prod_{j=1}^N \frac{ (1-w_j)^{-M_j(z)-1}}{w_j}=0.
$$
But this is true because 
\begin{equation*}
B\left(w_{1}, \ldots, w_{N}\right)\left( \frac{q}{w_k} - \frac{1}{w_{k+1}}\right)= \left[ \prod_{\substack{1 \leq i<j \leq N  \\ (i,j) \neq (k,k+1)}} \frac{w_{i}-w_{j}}{w_{i}-q w_{j}} \right] \frac{w_{k+1}-w_k}{w_kw_{k+1}},
\end{equation*}
making the integrand anti--symmetric in $w_k$ and $w_{k+1}$, so the integral evaluates to $0$, as needed. Thus \eqref{qRepl} is true.

Therefore plugging \eqref{qRepl} into \eqref{Dn2}, we get
\begin{multline*}
 \mathcal{D}_{n}^{(2)} (x,y)=  -\lambda_x  \sum_{z\in S} \mathbb{P}_x(Y(1)=z)  q^{N(N-1)/2} \left( \frac{-1}{2\pi i}\right)^N  \int \cdots \int dw_1 \cdots dw_N  \cdot  c_{q,k}w_k \\
 \times \frac{\lambda_x^{n-1}-(w_1+\ldots+w_N)^{n-1}}{ \lambda_x - (w_1+\ldots+w_N) } \frac{(-1)^n}{n!}
 B(w_1,\ldots,w_N) \prod_{j=1}^N \frac{ (1-w_j)^{-M_j(z)-1}}{w_j} ,
\end{multline*}
where $c_{q,k}$ is some constant depending only on $q,N$ and $k$, and the $k$ can be chosen arbitrarily. (In fact, $c_{q,k}=q^{k-N}[N]_q$ but this is not needed). This can then be rewritten as
\begin{multline}\label{Comp}
 \mathcal{D}_{n}^{(2)} (x,y)=  -\lambda_x  \sum_{z\in S} \mathbb{P}_x(Y(1)=z)  q^{N(N-1)/2} \left( \frac{-1}{2\pi i}\right)^N  \int \cdots \int dw_1 \cdots dw_N  \cdot  c_{q,k} \\
 \times \frac{\lambda_x^{n-1}-(w_1+\ldots+w_N)^{n-1}}{ \lambda_x - (w_1+\ldots+w_N) } \frac{(-1)^n}{n!}
 B(w_1,\ldots,w_N) \left(\prod_{\substack{ j=1 \\ j\neq k}}^N \frac{ (1-w_j)^{-M_j(z)-1}}{w_j} \right) (1-w_k)^{-M_k(z)-1},
\end{multline}

Now suppose that $\hat{x},\hat{y},\hat{x}',\hat{y}'$ satisfy $\rho(\hat{y})-\rho(\hat{x})=d=\rho(\hat{y}')-\rho(\hat{x}')$ and $I(\hat{x},\hat{y})=I(\hat{x}',\hat{y}')$. The goal then is to prove that
$$
\mathbb{P}_{\hat{x}}(X(t) \leq \hat{y}) = \mathbb{P}_{\hat{x}'}(X(t) \leq \hat{y}') 
$$
for all $t\geq 0$. By the same arguments as before, namely that the expression is analytic in $t$, it suffices to show that the $n$th derivative with respect to $t$ at $t=0$ are equal; i.e.
$$
\mathcal{D}_n^{(2)}(\hat{x},\hat{y}) = \mathcal{D}_n^{(2)}(\hat{x}',\hat{y}') 
$$
for all $n\geq 0$. To proceed, use induction on $n$. The base case, when $n=0$ is true because both sides just equal $1$. So suppose that the expression is true for all values less than $n$. 

Returning to  $\eqref{DefnD}$, with $n-1$ in place of $n$,  plug in the series 
$$
1+(1-w_k)+(1-w_k)^2 + (1-w_k)^3 + \ldots = \frac{1}{w_k},
$$
to obtain
\begin{multline*}
\mathcal{D}_{n-1}(x,y) =\lambda_x \sum_{z\in S} \mathbb{P}_x(Y(1)=z)  q^{N(N-1)/2} \left( \frac{-1}{2\pi i}\right)^N  \int \cdots \int dw_1 \cdots dw_N \\
\times \frac{(-\lambda_x)^{n-1} - (-1)^{n-1}(w_1+\ldots+w_N)^{n-1}} {w_1+\ldots+w_N-\lambda_x}\frac{1}{(n-1)!}
 B(w_1,\ldots,w_N) \prod_{\substack{j=1 \\ j \neq k}}^N \frac{ (1-w_j)^{-M_j(z)-1}}{w_j}  \\
 \times \sum_{s=0}^{\infty} (1-w_k)^{-M_k(z)-1+s}.
\end{multline*}

At this point, the value of $k$ needs to be chosen. It will depend only on $x,y,x',y'$. Because the intersection numbers $I(x,y)$ and $I(x',y')$ are all equal, the interval $[x_{j_0},y_{j_0}]$ must be either contained in or contain every other interval $[x_j,y_j]$ for $1\leq j \leq N$. Set $k$ to be the value which minimizes the length of the interval $[x_j,y_j]$; we must have $[x_k,y_k]\subseteq [x_{j_0},y_{j_0}].$ Then $M_k=y_k-z_k$ and the sum over $s$ need only range of $0$ to $y_k-z_k$ (otherwise the integral evaluates to $0$). For each such $s$, let $y^{(s)} = y +-s \epsilon_k$. Then rearranging the summations and using this new notation, we can conclude from the induction assumption, that for all $n \geq 0$,
\begin{multline*}
\mathcal{D}_{n-1}(x,y) =\lambda_x \sum_{z\in S} \mathbb{P}_x(Y(1)=z)  q^{N(N-1)/2} \left( \frac{-1}{2\pi i}\right)^N  \int \cdots \int dw_1 \cdots dw_N \\
\times \frac{(\lambda_x)^{n-1} -(w_1+\ldots+w_N)^{n-1}} {w_1+\ldots+w_N-\lambda_x}\frac{(-1)^{n-1}}{(n-1)!}
 B(w_1,\ldots,w_N) \prod_{\substack{j=1 \\ j \neq k}}^N \frac{ (1-w_j)^{-M^{(s)}_j(z)-1}}{w_j}  \\
 \times \sum_{s=0}^{y_k-z_k} (1-w_k)^{-M^{(s)}_k(z)-1},
\end{multline*}
where we recall that the integers $M_j(z)$ depend both on $y$ and $z$, so that the superscript indicates the dependence is on $y^{(s)}$ rather than $y$. Comparing this to \eqref{Comp}, this means that
$$
\mathcal{D}_{n-1}(x,y) = \frac{n}{c_{q,k}} \sum_{s=0}^{y_k-x_k} \mathcal{D}_n^{(2)}(x,y^{(s)}),
$$
which rearranges to 
$$
 \mathcal{D}_n^{(2)}(x,y) = c_{q,k}n^{-1}\mathcal{D}_{n-1}(x,y) -\sum_{s=1}^{y_k-x_k} \mathcal{D}_n^{(2)}(x,y^{(s)}) 
$$
By the induction assumption on $n$, the first term matches, meaning that
$$
c_{q,k}n^{-1}\mathcal{D}_{n-1}(\hat{x},\hat{y}) = c_{q,k}n^{-1}\mathcal{D}_{n-1}(\hat{x}',\hat{y}') .
$$
By the induction assumption on $d$, and the choice of $k$ (so that the intersection numbers match), the sum matches too. This completes the proof.

\subsection{Remarks on multi--species ASEP}
A natural question is if this method applies to the multi--species ASEP. Based on numerical simulations, the shift invariance of the finite--particle multi--species ASEP on the line will still hold. A proof of this statement could possibly follow from the general set up of Markov processes and embedded Markov chains on graded posets, although in the ASEP case the process is no longer monotone. The author hopes to pursue this step in later work.

\section{Asymptotic Limits}
As an application of Theorem \ref{Thm}, we consider the asymptotic limits of the joint $q$--moments of the multi--species $q$--TAZRP. 

\begin{theorem}\label{Application}
Use the same notations as in Theorem \ref{Thm}. Let $t,M_1,\ldots,M_N$ depend on a parameter $L$ in such a way that 
$$
t/L\rightarrow 1, \quad L^{-1/2}(M_j - L)\rightarrow \sigma_j.
$$
Then as $L\rightarrow \infty$,
\begin{multline*}
\mathbb{E}_y\left[ \prod_{j=1}^n q^{k_j N_{x_j}^{(n+1-j)}(\eta(t))}\right] \\
\rightarrow  \int_{-\infty}^{\sigma_1} dv_1 \cdots \int_{-\infty}^{\sigma_N} dv_N \frac{q^{N(N-1)/2}}{(2\pi L^{1/2})^N} \int_{-\infty}^{\infty} du_1 \cdots  \int_{-\infty}^{\infty} du_N  B(u_1,\ldots,u_N) \prod_{j=1}^N \exp(-i v_j u_j -u_j^2/2).
\end{multline*}
\end{theorem}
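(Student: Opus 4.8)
The plan is to perform a steepest-descent (saddle-point) analysis of the $N$-fold contour integral furnished by Theorem \ref{Thm}, zooming into the diffusive window $w_j = O(L^{-1/2})$. Writing the exponential part of the integrand as $\prod_{j=1}^{N}\exp\bigl(-M_j\log(1-w_j)-w_j t\bigr)$ and using that in the diffusive regime $M_j = L + \sigma_j L^{1/2} + o(L^{1/2})$ while $t = L(1+o(1))$, each single-variable action $f(w) = -\log(1-w)-w$ has a unique saddle at $w=0$, with $f(0)=f'(0)=0$ and $f''(0)=1$; the steepest-descent direction is therefore the imaginary axis. First I would deform each $w_r$-contour so that, near the origin, it runs along this direction through $0$ (slightly tilted so as to separate $0$ from the pole at $1$), while away from the origin it closes up so as to enclose $1$ through a region where the integrand is exponentially negligible. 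The dominant contribution then concentrates in a neighborhood of the saddle of size $L^{-1/2}$.

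Next I would substitute $w_j = -i u_j L^{-1/2}$. Three features make the limit transparent. Since $B$ is homogeneous of degree zero, $B(w_1,\dots,w_N) = B(u_1,\dots,u_N)$ passes to the limit unchanged, and the nesting condition $w_i = q w_j$ rescales to $u_i = q u_j$, so the nested-contour structure survives verbatim at the $L^{-1/2}$ scale. The expansion $\log(1-w) = -w - \tfrac12 w^2 + O(w^3)$ gives $-M_j\log(1-w_j) - w_j t \to -i\sigma_j u_j - \tfrac12 u_j^2$; the leading $O(L^{1/2})$ imaginary parts coming with opposite signs from $-M_j\log(1-w_j)$ and from $-w_j t$ cancel because $M_j/L \to 1$ and $t/L \to 1$, leaving the Gaussian weight and the linear shift governed by $M_j - t$. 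Finally the measure transforms as $dw_j/w_j = du_j/u_j$, so the surviving singular factor is exactly $1/u_j$.

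It then remains to account for these $1/u_j$ factors, which encode that the contour excludes $0$. Writing $e^{-i\sigma_j u_j}/u_j = -i\int_{-\infty}^{\sigma_j} e^{-i v_j u_j}\,dv_j$---legitimate once the $u_j$-contour is tilted so that the $v_j$-integral converges at $-\infty$---introduces precisely the integrals $\int_{-\infty}^{\sigma_j}dv_j$ and removes the pole at $u_j = 0$. Collecting the prefactor $(-1)^N q^{N(N-1)/2}(2\pi i)^{-N}$ together with the $N$ factors of $-i$ produced by this rewriting then yields the stated integral formula.

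The hardest part will be the rigorous steepest-descent estimate in this \emph{critical} regime $M_j \approx t$: the order-$M_j$ pole at $w_j = 1$ lies within the $L^{-1/2}$ saddle window, its distance $(M_j - t)/t \approx \sigma_j L^{-1/2}$ from the saddle being of the order of the Gaussian width. This is exactly the mechanism that turns a naive Gaussian into the Gaussian \emph{cumulative} distribution recorded by $\int_{-\infty}^{\sigma_j}dv_j$, so making it precise requires carefully tracking whether the deformed contour encloses that pole and extracting the corresponding residue, together with uniform tail bounds that discard the off-saddle portions of the nested contours and justify interchanging the limit with the $N$-fold integration (dominated convergence, with the Gaussian $e^{-u_j^2/2}$ supplying integrability). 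The coupling of the variables through $B$ and the nested-contour conditions add bookkeeping but, thanks to the degree-zero homogeneity of $B$, no further conceptual difficulty.
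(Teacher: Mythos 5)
Your proposal follows essentially the same route as the paper's proof: steepest descent applied to the contour formula of Theorem \ref{Thm} with the saddle at $w=0$, the substitution $w_j=-iu_jL^{-1/2}$, the degree-zero homogeneity of $B$, and the introduction of the $\int_{-\infty}^{\sigma_j}dv_j$ integrals as the device that neutralizes the simple pole at the origin. The only real difference is the order of operations at that last step: the paper differentiates the pre-limit expression in each $\sigma_j$ (each derivative brings down $-L^{1/2}\log(1-w_j)$, and $\log(1-w_j)/w_j$ is analytic at $0$, so the poles disappear before any deformation), does the saddle-point analysis, and integrates back over the $\sigma_j$ at the end; you keep the pole, tilt the $u_j$-contour into $\mathrm{Im}\,u_j>0$, and apply the representation $e^{-i\sigma_ju_j}/u_j=-i\int_{-\infty}^{\sigma_j}e^{-iv_ju_j}\,dv_j$ a posteriori. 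These are inverse maneuvers of one another, and both are legitimate; a nice feature of yours is that the tilt needed for convergence of the $v_j$-integral is exactly the indentation that keeps $0$ outside the $w_j$-contour, as the theorem's contour prescription requires.

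Two corrections to your commentary, neither fatal to the argument. First, your ``hardest part'' paragraph misidentifies the critical pole. The pole at $w_j=1$ sits at unit distance from the saddle, is enclosed by the deformed contour by construction, and lies where $\Re S<0$, so no residue at $w_j=1$ is ever at stake. What lies within the Gaussian window is the pole at $w_j=0$: the exact saddle of $-M_j\log(1-w_j)-w_jt$ is at $w_j^*=(t-M_j)/t\approx-\sigma_jL^{-1/2}$, i.e.\ at distance of order the Gaussian width from the origin, and it is this proximity that converts a Gaussian density into the Gaussian cumulative factor. Your main steps handle this (correct) pole properly, so this is a misdiagnosis rather than a gap. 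Second, your prefactor accounting, carried out consistently, gives $q^{N(N-1)/2}(2\pi)^{-N}$ --- note that $dw_j/w_j=du_j/u_j$ carries no power of $L$, and $(-1)^N(-i)^N/i^N=1$ --- not the $q^{N(N-1)/2}(2\pi L^{1/2})^{-N}$ printed in the statement. The discrepancy is on the paper's side: in its proof, each $\sigma_j$-derivative produces a factor $-L^{1/2}\log(1-w_j)$ whose combined $L^{N/2}$ is silently dropped, while the compensating $L^{-N/2}$ from $dw_j=-iL^{-1/2}du_j$ is retained, leaving a spurious $L$-dependence in what is supposed to be an $L\to\infty$ limit. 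So do not force your bookkeeping to reproduce the printed constant; your version is the self-consistent one.
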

\begin{proof}
Recall that the contour integral formula is:
$$
\frac{(-1)^N q^{N(N-1)/2}}{(2\pi i)^N}\int \frac{dw_1}{w_1} \cdots \int \frac{dw_N}{w_N} B(w_1,\ldots,w_N) \prod_{j=1}^N (1-w_j)^{-{M}_j} e^{-w_jt  } 
$$
Let $t$ and $M_j$ depend on a parameter $L$ as $t=L$ and $M_j = \lfloor  L + \sigma_j L^{1/2}\rfloor$. By the usual steepest descent method, the integrand is asymptotically
$$
\frac{B(w_1,\ldots,w_N)}{w_1\cdots w_N} \exp(S(w_1)) \cdots \exp(S(w_N)) $$
where $S(w)$ is the function 
$$
S(w) = -w - \log(1-w).
$$
The contours need to be deformed to regions where the real part of $S(w)$ is negative; this is shown in Figure \ref{OnlyFig}. Explicitly, the contours can be circles of radius $1$ centered at $1$. However, then the contours pass through the simple poles at $0$. 
\begin{figure}
\begin{center}
\includegraphics[height=8cm]{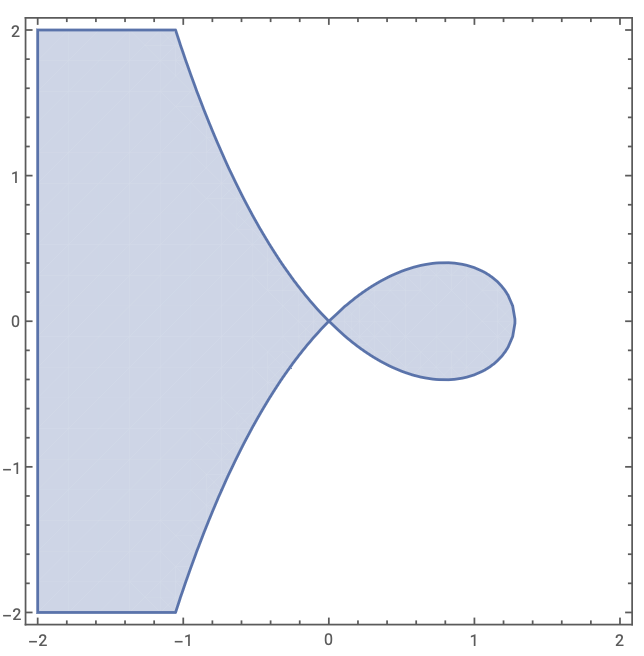}
\end{center}
\caption{The shaded region is where $\Re S(w)>0$. The contours must go through the unshaded region.}
\label{OnlyFig}
\end{figure}

To circumvent the issue, we will take the partial derivatives with respect to $\sigma_j$. By elementary analysis arguments, the partial derivatives can be interchanged with the integral and with the limit. Then the contour integral becomes, in the asymptotic limit,
$$
\frac{ q^{N(N-1)/2}}{(2\pi i)^N}\int \frac{\log(1-w_1) }{w_1}dw_1 \cdots \int \frac{\log(1-w_N)}{w_N} dw_NB(w_1,\ldots,w_N) \prod_{j=1}^N\exp(LS(w_j) -L^{1/2}\sigma_j\log(1-w_j))
$$
which no longer has poles at $0$. Near $w=0$, the function $S(w)$ and $\log(1-w)$  has the Taylor series
$$
S(w) = \frac{w^2}{2} + \frac{w^3}{3} + O(w^4), \quad \log(1-w) = -w- \frac{w^2}{2} +O(w^3).
$$
Therefore we make the substitution $w_j = -i L^{-1/2}u_j$. The $u_j$--contours then becomes the real line, so the limit becomes

$$
 \frac{q^{N(N-1)/2}}{(2\pi L^{1/2})^N} \int_{-\infty}^{\infty} du_1 \cdots  \int_{-\infty}^{\infty} du_N  B(u_1,\ldots,u_N) \prod_{j=1}^N \exp(-i \sigma_j u_j -u_j^2/2).
$$
Here we used that $B(cu_1,\ldots,cu_N)=B(u_1,\ldots,u_N)$ for any constant $c$. Finally, we need to integrate over $\sigma_1,\ldots,\sigma_N$ since the partial derivatives with respect to $\sigma_j$ were taken before. 

\end{proof}
Note that the integral is real, despite the presence of the imaginary number in the integrand. In fact, for $N=1$ recall that
$$
\frac{1}{\sqrt{2\pi}} \int_{-\infty}^{\infty} e^{-i\sigma u-u^2/2}du = e^{-\sigma^2/2}.
$$
Also note that the $N=1$ case does not depend on $q$. This can be understood in the following way: as $q$ decreases towards $0$, the height function $N_x(\eta(t))$ also decreases, because the jump rates slow down. Both of these decreases cancel out in the value of the expectation $\mathbb{E}[q^{N_x(\eta(t))}]$. 

In principle, this theorem allows us to find (computationally) the joint distributions of the height functions $N_{x}^j$. The author hopes to provide an explicit, non--numerical formula in later work.

\section{Alternative Proof Of the Corollary \ref{Cor}}
This section provides an alternative proof of the corollary. The argument uses the Markov duality and the transition probabilities for the multi--species $q$--TAZRP with $q$--exchangeable initial conditions. Because the corollary has already been proved with a more general method, and because this alternative proof is specific to the $q$--TAZRP and not to other models like ASEP, and because this proof uses some more algebraic notation, some of the exposition here is deliberately kept terse.

\subsection{More notation}

In general, a particle configuration can have infinitely many particles. When restricting to states with finitely many particles, there is another convenient way of writing particle configurations.  Assume there are $N_k$ particles of species $k$ ($1 \leq k \leq n$). Set $N=N_1 + \ldots + N_n$ to be the total number of particles. Let $\mathbf{N}$ denote $(N_1,\ldots,N_n)$. For $i\leq j$ let $N_{[i,j]}$ denote $N_i + \ldots + N_j$. A particle configuration can be expressed as a pair $(\mathbf{x},\sigma)$, where 
$$
\mathbf{x} = (x_1 \geq x_2 \geq \ldots \geq x_N)
$$
indicates the location of the particles. Let $\sigma \in S(N)$ denote the ordering of the species, in the sense that if $\sigma$ is written in two--line notation as
$$
\left(
\begin{array}{cccc}
\sigma_1 & \sigma_2 & \cdots & \sigma_N\\
1 & 2 & \cdots & N
\end{array}
\right),
$$
so that $\sigma_j = \sigma^{-1}(j)$, then the $N_k$ particles of species $k$ are located at the lattice sites
\begin{equation}\label{Equiv1}
x_{\sigma_{N_{[1,k-1]}+1}}, \ldots, x_{ \sigma_{  N_{[1,k]} }}.
\end{equation}

An equivalent description of the particle configuration $(\mathbf{x},\sigma)$ is as follows. For
$$
\mathbf{x} \in \mathcal{W}_N := \{ (x_1 , \ldots, x_N) : x_1 \geq \ldots \geq x_N  \} \subset \mathbb{Z}^N,
$$
define $\mathbf{m}(\mbfx)=(m_1,\ldots,m_r)$ so that 
$$
x_1 = \cdots = x_{m_1} > x_{m_1+1} = \cdots = x_{m_1+m_2} > x_{m_1+m_2+1} = \cdots = \cdots
 > x_{m_1+\ldots + m_{r-1}+1} = \cdots = x_N,
$$
 where $m_r$ is defined by $m_1+\ldots+m_r=N$.  Also define $k_1,\ldots,k_N$ by $k_1 = \cdots = k_{N_1}=1, k_{N_1+1} = \cdots = k_{N_1+N_2}=2, \ldots$. Then the particles located at the lattice site $x_{m_1 + \ldots + m_s + 1} = \cdots = x_{m_1 + \ldots + m_{s+1}}$ have species 
\begin{equation}\label{Equiv2}
k_{\sigma(m_1 + \ldots + m_s + 1)}, \ldots, k_{\sigma(m_1 + \ldots + m_{s+1})}.
\end{equation}
Note that $\sigma(j)$ is not the same as $\sigma_j$ in this notation.

Because more than one particle can occupy a site, the map $\mathcal{W}_N \times S(N) \rightarrow  (\mathbb{Z}^n_{\geq 0})^{\mathbb{Z}}$ is not injective. 

\textbf{Example 1.} Consider the particle configuration shown in the left side of Figure \ref{State}. There is more than one $\sigma\in S(N)$ which defines this particle configuration, and it is not hard to see that $\sigma = 21467358$ has the fewest inversions. The permutations $21476358,21567438,$ and $ 35178426$ also describe the same particle configuration as $\sigma$, but have more inversions. 

\begin{figure}
\begin{center}
\includegraphics{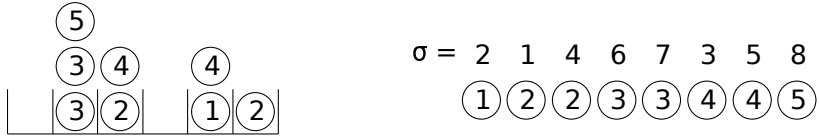}
\caption{The particle configuration referenced in Example 1.} 
\label{State}
\end{center}
\end{figure}

From now on, a particle configuration of the $n$--species $q$--TAZRP will be uniquely defined by a pair $(\mathbf{x},\sigma)$, where $\mathbf{x} \in \mathcal{W}_N$ and $\sigma \in S(N)$ has the fewest inversions among the permutations that define the particle configuration. Uniqueness is proven in Proposition 2.7 of \cite{KuanAHP}.

Let $\inv(\sigma)$ denote the number of inversions of $\sigma$.

Given a particle configuration $(\mathbf{x},\sigma)$, where $\mathbf{m}(\mathbf{x})=(m_1,\ldots,m_r)$ and the species numbers are given by $(N_1,\ldots,N_n)$, let $L_{ij}$ denote the number of species $j$ particles located at the lattice site $x_{m_1 + \ldots + m_i}$. Here the ranges of $i$ and $j$ are given by $1 \leq i \leq r$ and $1 \leq j \leq n$.

Let us define the dynamics of the multi--species $q$--TAZRP. A visual example is shown below in Figure \ref{TAZER}.

Consider a particle configuration $\xi$ of the $N$--particle $q$--TAZRP with exactly one particles of each species. For any $\mathbf{N}=(N_1,\ldots,N_n)$ such that $N_1+\ldots+N_n=N$, there is a projection map $\Pi_{\mathbf{N}}$ defined by 
$$
(\Pi_{\mathbf{N}}(\xi))^x_i = \xi^x_{N_1+\ldots+N_i+1} + \ldots +  \xi^x_{N_1+\ldots+N_{i+1}} 
$$
In words, $\Pi_\mathbf{N}$ forgets the distinction between the species $N_1+\ldots+N_{i}+1,\ldots, N_1+\ldots+N_{i+1}$, where $1\leq i \leq n$. It is not hard to see from the jump rates that
$$
\mathrm{Prob}_{\Pi_{\mathbf{N}}\eta}( \Pi_{\mathbf{N}}\xi;t) = \mathrm{Prob}_{\eta}(\xi;t).
$$
In other words, $\Pi_{\mathbf{N}}$ is a Markov projection.

We also remark on the connection to $q$--TASEP and the Tracy--Widom fluctuations. Let $x_t(t)>x_2(t)>\ldots$ denote the positions of the $q$--TASEP. Then $\eta^k(t) = x_{k}(t) -x_{k+1}(t)-1$, where $x_0(t)$ is set to be $\infty$.  Thus
$$
N_k(\eta(t)) = \eta^k(t) + \eta^{k+1}(t) + \ldots = \lim_{L\rightarrow \infty} x_k(t) - x_{k+L}(t) - L = x_k(t) + k
$$
for step initial condition $x_k(0)=-k$. 

\subsection{Transition Probabilities in the multi--species case}
The following describes some transition probabilities in the multi--species case. 
\begin{equation}\label{WW51}
\sum_{\sigma \in S_{n}} A_{\sigma}\left(w_{\sigma^{-1}(1)}, \ldots, w_{\sigma^{-1}(n)}\right)=[n]_{q} ! B\left(w_{1}, \ldots, w_{n}\right)
\end{equation}
\begin{prop}\label{AHPThm}
Let $\mathbf{N}=(N_1,\ldots,N_n)$ and $N=N_1+\ldots N_n$. Fix a particle configuration $(\mathbf{x},\sigma)$ where $\mathbf{x}=(x_1,\ldots,x_N)$ and $\sigma \in S(N)$. Given initial conditions where all $N$ particles begin at $0$,
\begin{multline*}
\mathrm{Prob}_0((\mathbf{x},\sigma) \text{ at time } t) = q^{\inv(\sigma)}\left(\frac{\prod_{j=1}^{n}\left[N_{j}\right]_{q}^{!}}{\prod_{j=1}^{n} \prod_{i=1}^{r}\left[L_{i j}\right]_{q}^{!}}\right)   \left( \prod_{k=1}^N \frac{-1}{b_{x_k}}\right) \\
\times  \left( \frac{1}{2\pi i}\right)^N \sum_{\omega \in S(N)} \int_{\mathcal{C}_R} \cdots \int_{\mathcal{C}_R} B(w_1,\ldots,w_N)\prod_{j=1}^N \left[ \prod_{k=y_{\omega(j)}}^{x_j} \left( \frac{b_k}{b_k - w_{\omega(j)}}\right) e^{-w_jt}  \right] dw_1 \cdots dw_N,
\end{multline*}
where the $\mathcal{C}_R$ are large contours, centered at the origin, that enclose the poles $b_k$ and $w_j=qw_i$.
\end{prop}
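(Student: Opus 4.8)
The plan is to verify directly that the right--hand side, regarded as a function $u(\mbfx,\sigma;t)$ of the configuration $(\mbfx,\sigma)$ and time $t$, coincides with the transition probability of the $N$--particle multi--species $q$--TAZRP. To do this I would check the three properties that characterize that probability uniquely: that $u$ solves the forward (master) equation for the generator $L_{\text{mqTAZRP}}$ in the interior region where no two particles share a site; that it satisfies the two--body boundary conditions encoding the interaction when particles collide; and that it reduces to the indicator $\mathbf{1}[\mbfx=(0,\ldots,0)]$ as $t\to 0$. Uniqueness of a bounded solution with the correct initial data then forces equality. This is the coordinate Bethe ansatz strategy of Tracy--Widom type, adapted to the colored setting and building on the single--species formula of Proposition \ref{AHPThm2}.

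First I would treat the interior equation. Each plane wave $\prod_j \bigl(\prod_k \tfrac{b_k}{b_k-w_{\omega(j)}}\bigr)\,\prod_j e^{-w_j t}$ is a free eigenfunction: differentiating in $t$ brings down $-\sum_j w_j$, which matches the diagonal action of the generator once no two particles occupy the same site, and the prefactor $B(w_1,\ldots,w_N)$ together with $\prod_k (-1)/b_{x_k}$ is independent of $\omega$, so it passes through the symmetrization. Away from collisions the equation therefore holds term by term under the integral.

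The crux, and the step I expect to be hardest, is the boundary condition. When particles collide the free eigenfunctions no longer satisfy the generator equation, and the discrepancy must cancel between the term indexed by $\omega$ and the term indexed by $\omega$ composed with the relevant adjacent transposition. This cancellation holds precisely because the ratio of the two coefficients is the scattering matrix $S(w_a,w_b)=-\tfrac{qw_b-w_a}{qw_a-w_b}$ encoded in the $A_\sigma$, whose symmetrization is recorded by the identity \eqref{WW51}. In the colored setting the boundary relation is sensitive to the species ordering: a collision of two equal--species particles reduces to the single--species boundary relation already verified in Proposition \ref{AHPThm2}, while a collision of unequal species contributes the asymmetric $q$--weight. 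Tracking these weights across all adjacent transpositions produces the global factor $q^{\inv(\sigma)}$ and, after accounting for the orderings of same--species particles sharing a site, the $q$--multinomial $\tfrac{\prod_j [N_j]_q^!}{\prod_j\prod_i [L_{ij}]_q^!}$. Here I would invoke the Markov projection $\Pi_{\mathbf{N}}$: the general $\mathbf{N}$ case is obtained from the all--distinct--species (rainbow) case by summing transition probabilities over the fibers of $\Pi_{\mathbf{N}}$, and that fiber sum is exactly the $q$--count yielding the multinomial prefactor.

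Finally I would check the initial condition by a residue computation. Since the contours $\mathcal{C}_R$ enclose the poles at $b_k$ and at $w_j=q w_i$, sending $t\to 0$ collapses each $w_j$--integral onto these poles; using \eqref{WW51} to resum $\sum_\sigma A_\sigma=[N]_q^! B$ then matches the normalization and returns the indicator that every particle sits at $0$. The only genuine obstacle is the colored boundary condition of the previous paragraph, where the interaction between the species ordering $\sigma$ and the scattering coefficients must be bookkept exactly; the interior equation and the $t\to 0$ localization are routine once the coefficients are in the stated $S(w_a,w_b)$ form.
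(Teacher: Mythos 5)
You should first note that the paper does not actually prove this proposition: its ``proof'' is a citation (a special case of Theorem 3.2 of \cite{KuanAHP}, resting on Corollary 1.3 of \cite{Wawa}, with \cite{KoLe} for the homogeneous case), so any self-contained argument is necessarily a different route. Your Bethe-ansatz plan, however, has a genuine gap exactly at the step you yourself flag as the crux. The stated formula's integrand is completely color-blind: the only dependence on the species data $(\sigma, N_j, L_{ij})$ sits in the scalar prefactor outside the integral, and the symmetrization $\sum_{\omega\in S(N)}$ is over scalar plane waves. But the colored master equation couples configurations with the same positions and \emph{different} colorings: when particles of distinct species share a site, the rates $q^{\xi_1^x+\ldots+\xi_{j-1}^x}[\xi_j^x]_q$ depend on the species ordering, so the equation for $(\mathbf{x},\sigma)$ involves configurations $(\mathbf{x}',\sigma')$ with $\sigma'\neq\sigma$. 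A scalar symmetrized integrand cannot satisfy such a system configuration-by-configuration; in the colored setting the two-body boundary relation is matrix-valued on the color space, and ``tracking the weights across adjacent transpositions to produce $q^{\inv(\sigma)}$'' is precisely what cannot be done term by term under the integral. Likewise, your appeal to the projection $\Pi_{\mathbf{N}}$ only controls the fiber sums $\sum_{\sigma}\mathrm{Prob}_0((\mathbf{x},\sigma))$; that is a consistency check on the claimed formula, not a way to recover the individual colored probabilities from the color-blind one.

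The missing ingredient that closes this gap is preservation of $q$-exchangeability, which the paper itself uses elsewhere (it is established in the proof of Lemma 3.5 of \cite{KuanAHP}). Since all $N$ particles start at the single site $0$, the initial coloring is $q$-exchangeable, and the dynamics preserve this property; hence at any time $t$ the conditional law of the coloring $\sigma$ given the positions $\mathbf{x}$ is proportional to $q^{\inv(\sigma)}$, with normalization over the fiber given by the $q$-multinomial count, which is exactly where the factor $\prod_{j}[N_j]_q^! / \prod_{j}\prod_{i}[L_{ij}]_q^!$ comes from. With that in hand, the colored transition probability factors as (conditional color probability) $\times$ (color-blind position probability), and the latter is the known single-species formula (Proposition \ref{AHPThm2}, i.e.\ \cite{KoLe}, or \cite{Wawa} for general rates $b_x$); multiplying the two gives the stated expression. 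Your interior-equation and $t\to 0$ checks are then only needed for the color-blind formula, where they are indeed routine and already established in the single-species literature. So the proof can be repaired, but by replacing the colored boundary-condition bookkeeping with the $q$-exchangeability/disintegration argument, not by refining the scattering-matrix cancellation.
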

\begin{proof}
This is a special case of Theorem 3.2 of \cite{KuanAHP}, and follows from Corollary 1.3 of \cite{Wawa}; see also \cite{KoLe} for the homogeneous case when all $b_x\equiv 1$.
\end{proof}

\subsection{Completing the Alternative Proof}
Set
$$
c(q,N,K,m) = \sum_{K \leq i_1<i_2<\cdots <i_m\leq N-1} q^{i_1}q^{i_2}\cdots q^{i_m}
$$
For example,
$$
c(q,N,K,0) =1, \quad \quad c(q,N,K,1) = q^{N-1} + q^{N-2} + \ldots + q^{K}, \quad \quad c(q,N,K,N-K) = q^{N-1}q^{N-2}\cdots q^K.
$$
Note that (splitting the sum into $i_1=K-1$ and $i_1\neq K-1$)
\begin{equation}\label{cCur}
c(q,N,K-1,a) 
= 
c(q,N,K,a) + q^{K-1}c(q,N,K,a-1), 
\end{equation}
For $\sigma \in S(K)$ and $K\leq L \leq N$, define the ``large contour'' integral
\begin{multline*}
\mathcal{I}(q,N,L,K,\vec{M},\sigma,\vec{x}) = c(q,N,K,L-K)q^{\inv(\sigma)} (-1)^L \left(\frac{1}{2\pi i}\right)^L \int_{\mathcal{C}_R} dw_1 \cdots  \int_{\mathcal{C}_R} dw_L B(w_1,\ldots,w_L) \\
\times \prod_{j=1}^{L-K} \left( \frac{(1-w_j)^{-M_j-1}}{w_j}\right) \prod_{j=L-K+1}^L  \left((1-w_j)^{-x_{\sigma(j+K-L)}-1} \right) e^{-(w_1+\cdots w_L)t},
\end{multline*}
where by convention an empty product $\prod_{j=a+1}^a$ equals $1$.
Additionally define the ``small contour'' integral by
\begin{multline*}
\tilde{\mathcal{I}}(q,N,K,\vec{M},\sigma,\vec{x}) = q^{N-1}q^{N-2}\cdots q^{K} q^{\inv(\sigma)}(-1)^N \left(\frac{1}{2\pi i}\right)^N \int_{\tilde{\mathcal{C}}^1} dw_1 \cdots  \int_{\tilde{\mathcal{C}}^N} dw_N B(w_1,\ldots,w_N) \\
\times \prod_{j=1}^{N-K} \left( \frac{(1-w_j)^{-M_j-1}}{w_j}\right) \prod_{j=N-K+1}^N  \left((1-w_j)^{-x_{\sigma(j+K-N)}-1} \right) e^{-(w_1+\cdots w_N)t},
\end{multline*}
where the contour $\tilde{\mathcal{C}}^r$ contains $q\tilde{\mathcal{C}}^{r+1}, \ldots, q\tilde{\mathcal{C}}^N$ and $1$, but not $0$. 

The next proposition gives both a ``large contour'' and a ``small contour'' integral formula for the probability mentioned in the abstract. Note that the two formulas reveal the structure of the two Markov projections. 

\begin{prop}\label{Main}
Fix all $b_x \equiv 1$. Consider a $N$--particle $q$--TAZRP with exactly one particle of each species. Let the initial condition consist of all particles at $0$.   For $1\leq i \leq N$, let $z_i(t)$ denote the location of the $i$th--species particle at time $t$. For any fixed $K <  N$ and any $x_1,\ldots,x_K \geq 0$, let $\sigma\in S(K)$ denote the permutation satisfying $x_{\sigma(1)}\geq \cdots \geq x_{\sigma(K)}$ with the fewest number of inversions. Then

(a)
$$
\mathrm{Prob}(z_1(t)=x_1,\ldots,z_K(t)=x_K, z_{K+1}(t) \leq M_{N-K}, \ldots, z_N(t) \leq M_1 )=\sum_{L=K}^N \mathcal{I}(q,N,L,K,\vec{M},\sigma,\vec{x}).
$$

(b)
$$
\mathrm{Prob}(z_1(t)=x_1,\ldots,z_K(t)=x_K, z_{K+1}(t) \leq M_{N-K}, \ldots, z_N(t) \leq M_1 )= \tilde{\mathcal{I}}(q,N,K,\vec{M},\sigma,\vec{x}).
$$

\end{prop}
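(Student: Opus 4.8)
The plan is to derive both formulas from the single-configuration transition probability of Proposition \ref{AHPThm} (specialized to $b_x\equiv 1$), treating part (a) as the fundamental computation from which part (b) follows by contour deformation. The first step is to write the event in the proposition as a disjoint union over genuine particle configurations: species $1,\ldots,K$ are pinned at $x_1,\ldots,x_K$, while species $K+1,\ldots,N$ range over all positions obeying $z_{K+j}\le M_{N-K+1-j}$. Since there is exactly one particle of each species, the prefactor $\prod_j[N_j]_q^!/\prod_{i,j}[L_{ij}]_q^!$ collapses to $1$ away from coincident sites (and reproduces the usual $q$-multinomials at coincident sites), and with all particles starting at $0$ the spatial product reduces to $\prod_j (1-w_{\omega(j)})^{-(x_j+1)}e^{-w_jt}$. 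I would invoke the Markov projection $\Pi_{\mathbf{N}}$ to organize the sum over the free species, which need not be distinguished beyond their thresholds, exactly as in the reduction used in Proposition \ref{Dual}.

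To obtain part (a) I would induct on the number $N-K$ of unpinned species, summing one free coordinate at a time using the finite geometric sum $\sum_{x=0}^{M}(1-w)^{-(x+1)} = -w^{-1} + w^{-1}(1-w)^{-(M+1)}$, which is legitimate because the totally asymmetric dynamics force every $x_j\ge 0$. The second term is precisely a new threshold factor $w^{-1}(1-w)^{-(M+1)}$, which raises the count $L-K$ of threshold variables; the first term $-w^{-1}$ carries no threshold and, integrated against the large contour $\mathcal{C}_R$ (which encircles $0$), removes that variable — the residue at $w=0$ is compatible with the vanishing of $e^{-wt}$ there — thereby lowering the total number of variables from $N$ to $L$. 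Expanding the product over the $N-K$ free species into $2^{N-K}$ terms, each indexed by the subset retaining a threshold factor, and accumulating the $q$-powers produced as the remaining variables are integrated out, one should find that these $q$-weights assemble into the elementary symmetric sum $c(q,N,K,L-K)=e_{L-K}(q^K,\ldots,q^{N-1})$. The recursion \eqref{cCur}, $c(q,N,K-1,a)=c(q,N,K,a)+q^{K-1}c(q,N,K,a-1)$, is the bookkeeping identity driving the induction: its two summands correspond to the threshold term and the removed term when one further species is freed. Summing over all subsets yields $\sum_{L=K}^N \mathcal{I}(q,N,L,K,\vec{M},\sigma,\vec{x})$.

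For part (b) I would deform the $L$ large contours in each $\mathcal{I}$ to the nested small contours $\tilde{\mathcal{C}}^1,\ldots,\tilde{\mathcal{C}}^N$, which enclose $1$ and the poles $w_r=qw_s$ but not $0$. Each deformation past the simple pole at $w=0$ again produces a residue, and the net effect of these residues, summed across all the $\mathcal{I}$ terms, is to collapse the entire sum $\sum_{L=K}^N$ into the single small-contour integral $\tilde{\mathcal{I}}$; the prefactor $q^{N-1}q^{N-2}\cdots q^{K}=c(q,N,K,N-K)$ in $\tilde{\mathcal{I}}$ is exactly the top term of this residue expansion, with the antisymmetrization identity for $B$ used in the proof of Theorem \ref{Thm} simplifying the intermediate poles at $w_r=qw_s$. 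Alternatively one may cite the large-to-small contour passage of \cite{KoLe} and \cite{KuanAHP} for the homogeneous $q$-TAZRP and verify that the threshold factors $w_j^{-1}(1-w_j)^{-M_j-1}$ are transported unchanged.

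The main obstacle I anticipate is the $q$-power bookkeeping: simultaneously tracking $q^{\inv(\sigma)}$, the $q$-multinomials emerging at coincident sites, and the $q$-weights accumulated through the iterated residue removals, and confirming that these combine into precisely $c(q,N,K,L-K)$ rather than some other symmetric function of the $q^{i}$. A secondary difficulty is justifying the contour manipulations — checking that no spurious poles at $w_r=qw_s$ are crossed during the successive deformations and that summation and integration may be exchanged. Treating uniformly the case where several of the $x_i$ coincide, so that $\sigma$ has genuine repeated-value blocks and the $L_{ij}$ are nontrivial, is likely the most delicate point of the argument.
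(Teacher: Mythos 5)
Your overall strategy is the same as the paper's: part (a) by induction on $N-K$, summing the formula of Proposition \ref{AHPThm} over the free particles' positions via geometric series, and part (b) by deforming large contours to small ones and collecting residues at $0$. However, there is a genuine gap at the heart of part (a). You treat the sum over a free coordinate as a single clean geometric series $\sum_{x=0}^{M}(1-w)^{-(x+1)}=-w^{-1}+w^{-1}(1-w)^{-(M+1)}$ with a fixed integrand, but the integrand is \emph{not} fixed: each time the free particle crosses one of the pinned particles, the permutation $\sigma$ with fewest inversions changes, so both the power $q^{\inv(\sigma)}$ and the assignment of the integration variables $w_j$ to particle positions change. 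The paper therefore splits the sum into segments $\sum_{y=\bar{x}_{k+1}+1}^{\bar{x}_k}$, each carrying its own weight $q^{N-1-k}$ and its own variable $w_{k+1}$, and the resulting sum over slots $k$ does not combine termwise; it collapses only because it telescopes \emph{under the integral sign}, and the telescoping rests on the identity \eqref{AntiSymm}, which makes the difference of consecutive terms antisymmetric in $w_k,w_{k+1}$ so that its integral vanishes. This antisymmetrization step is the key mechanism of the proof, and it is exactly the ``$q$-power bookkeeping'' you flag as your main anticipated obstacle -- i.e., the obstacle is real and your proposal does not contain the idea that resolves it.

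A second, related gap concerns the claim that integrating the $-w^{-1}$ term over $\mathcal{C}_R$ simply ``removes that variable.'' This is only true for the \emph{last} variable: for $w_N$, the poles $w_N=w_i/q$ of $B$ lie outside $\mathcal{C}_R$ and one picks up only the residue at $0$, using $B(w_1,\ldots,w_{N-1},0)=B(w_1,\ldots,w_{N-1})$. For an intermediate variable $w_P$ the large contour also encloses the poles $w_P=qw_j$ ($j>P$), and setting $w_P=0$ in $B$ produces the factor $q^{-(L-P)}$ (the identity stated after \eqref{Recur}); these are precisely the $q$-powers that the paper tracks through the mixed-contour integrals $J$ and the recursions \eqref{IJ}, \eqref{Recur}, \eqref{cCur} in part (b), and through the arrangement of the telescoping in part (a) so that the bare $1/w$ factor always lands on the last variable. (Also, a small slip: $e^{-wt}$ does not vanish at $w=0$; it equals $1$ there.) Without these two ingredients -- the antisymmetry-driven telescoping and the positional $q$-corrections when zeroing out variables -- the $2^{N-K}$-term expansion you describe does not assemble into $\sum_L c(q,N,K,L-K)\,(\cdots)$, so the proposal as written does not close.
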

\begin{proof} (a)
Proceed by induction on the value of $N-K$. The base case is when $K=N-1$. We then need to show that the probability on the left--hand--side equals (where $\iota_L = (-2\pi i)^L$)
\begin{align*}
&q^{\inv(\sigma)} \iota_{N-1} \int_{\mathcal{C}_R} dw_1\cdots \int_{\mathcal{C}_R} dw_{N-1} B(w_1,\ldots,w_{N-1})\prod_{j=1}^{N-1} \left( (1- w_j)^{-x_{\sigma(j)}-1} e^{-w_jt}\right)\\
&+ q^{N-1}q^{\inv(\sigma)} \iota_N\int_{\mathcal{C}_R}  dw_1\cdots \int_{\mathcal{C}_R}  dw_NB(w_1,\ldots,w_N) \prod_{j=2}^N \left( (1- w_j)^{-x_{\sigma(j-1)}-1} e^{-w_jt}\right) \left( \frac{(1- w_1)^{-M_1-1} }{w_1}e^{-w_1t}\right).
\end{align*}
Define $\bar{x}=(\bar{x}_1 \geq \ldots \geq \bar{x}_{N-1})$ so that $x_{\sigma(j)}=\bar{x}_{j}$. By Proposition \ref{AHPThm}, and the additivity property of inversions (namely, that $\mathrm{\inv}(ab)=\inv(a)+\inv(b)$),
\normalsize
\begin{align*}
&\mathrm{Prob}(z_1(t)=x_1,\ldots,z_K(t)=x_{N-1}, z_N(t) \leq M_1 )\\
&= q^{\inv(\sigma)} \sum_{k=0}^{N-1} q^{N-1-k} \sum_{y = \bar{x}_{k+1}+1}^{\bar{x}_{k}} (-1)^N \left( \frac{1}{2\pi i}\right)^N  \int_{\mathcal{C}_R} \cdots \int_{\mathcal{C}_R} B(w_1,\ldots,w_N) \\
& \quad \quad \times \left[ \prod_{j =1}^{k} (1-w_j)^{-\bar{x}_j-1} \right] (1-w_{k+1})^{-y-1} \left[ \prod_{j=k+2}^N (1-w_j)^{-\tilde{x}_{j-1}-1}\right] e^{-(w_1+\ldots+w_N)t} dw_1,\ldots dw_N,
\end{align*}
\normalsize
where by convention $\bar{x}_0=M_1$ and $\bar{x}_N+1=0$, and a sum of the form $\sum_{a+1}^a$ equals $0$, and a product of the form $\prod_{a+1}^a$ equals $1$. Each summation is a geometric series, so 
\begin{align*}
q^{N-1-k}\sum_{y = \bar{x}_{k+1}+1}^{\bar{x}_{k}} (1-w_{k+1})^{-y-1} &= q^{N-1-k} \frac{1- (1-w_{k+1})^{\bar{x}_{k+1}-\bar{x}_{k}}}{ 1- (1-w_{k+1})^{-1}} (1-w_{k+1})^{-\bar{x}_{k+1}-2}\\
&= q^{N-1-k} \frac{(1-w_{k+1})^{-\bar{x}_{k+1}-1}- (1-w_{k+1})^{-\bar{x}_{k}-1}}{-w_{k+1}} .
\end{align*}
Note that the identity still holds when $\bar{x}_j=\bar{x}_{j+1}$.
Therefore
\normalsize
\begin{align*}
&\mathrm{Prob}(z_1(t)=x_1,\ldots,z_K(t)=x_{N-1}, z_N(t) \leq M_1)\\
&=q^{\inv(\sigma)}  \sum_{k=0}^{N-1}  (-1)^{N-1} \left( \frac{1}{2\pi i}\right)^N  \int_{\mathcal{C}_R} \cdots \int_{\mathcal{C}_R} B(w_1,\ldots,w_N) dw_1\cdots dw_N\\
& \quad \quad \times  \left[ \prod_{j =1}^{k} (1-w_j)^{-\bar{x}_j-1} \right]   q^{N-1-k} \frac{(1-w_{k+1})^{-\bar{x}_{k+1}-1}- (1-w_{k+1})^{-\bar{x}_{k}-1}}{w_{k+1}}  \left[ \prod_{j=k+2}^N (1-w_j)^{-\tilde{x}_{j-1}-1}\right] e^{-(w_1+\ldots+w_N)t} .
\end{align*}
\normalsize
This sum turns out to be a telescoping sum. To see this, we need to show that for each $1 \leq k \leq N-1$,
\begin{multline*}
\int_{\mathcal{C}_R} \cdots \int_{\mathcal{C}_R} B(w_1,\ldots,w_N) \left[ \prod_{j =1}^{k-1} (1-w_j)^{-\bar{x}_j-1} \right] (1-w_k)^{-\bar{x}_{k}-1} (1-w_{k+1})^{-\bar{x}_{k}-1}  \left[ \prod_{j=k+2}^N (1-w_j)^{-\tilde{x}_{j-1}-1}\right] \\
\times \left( \frac{q^{-(k+1)}}{w_{k+1}} - \frac{q^{-k}}{w_{k}}  \right)e^{-(w_1+\ldots+w_N)t} dw_1 \cdots dw_N = 0.
\end{multline*}
This integral equals $0$ because
\begin{equation}\label{AntiSymm}
B\left(w_{1}, \ldots, w_{N}\right)\left( \frac{q}{w_k} - \frac{1}{w_{k+1}}\right)= \left[ \prod_{\substack{1 \leq i<j \leq N  \\ (i,j) \neq (k,k+1)}} \frac{w_{i}-w_{j}}{w_{i}-q w_{j}} \right] \frac{w_{k+1}-w_k}{w_kw_{k+1}},
\end{equation}
making the integrand anti--symmetric in $w_k$ and $w_{k+1}$. Thus its integral is zero, showing that the sum over $k$ is a telescoping sum. After all cancelations, the remaining terms equal
\begin{multline*}
 q^{\inv(\sigma)} (-1)^{N-1} \left( \frac{1}{2\pi i}\right)^N  \int_{\mathcal{C}_R} dw_1 \cdots \int_{\mathcal{C}_R} dw_NB(w_1,\ldots,w_N) \left[\prod_{j=1}^{N-1} (1-w_j)^{-\bar{x}_j-1}\right] \frac{(1-w_N)^{-\bar{x}_N-1}}{w_N}e^{-(w_1+\ldots+w_N)t} \\
 + q^{N-1}q^{\inv(\sigma)} (-1)^N \left( \frac{1}{2\pi i}\right)^N  \int_{\mathcal{C}_R} dw_1 \cdots \int_{\mathcal{C}_R} dw_NB(w_1,\ldots,w_N) \frac{(1-w_1)^{-\bar{x}_0-1}}{w_1} \left[ \prod_{j=2}^N (1-w_j)^{-\tilde{x}_{j-1}-1}\right]e^{-(w_1+\ldots+w_N)t} .
\end{multline*}
Recalling that $\bar{x}_N+1=0$, the $w_N$--contour of first term has only a simple pole at $w_N=0$. Using that $B(w_1,\ldots,w_{N-1},0)=B(w_1,\ldots,w_{N-1})$, the first term reduces to the first term in the statement of the lemma. It is immediate that the second terms match as well. This completes the base case. 

The inductive step follows from \eqref{cCur} and the same argument.

(b) Start with the sum in the right--hand--side of part (a). The idea is that deforming the large contours to small contours results in many cancelations, leaving only the $N$--fold contour integral. Define a ``mixed contour'' integral by
\begin{multline*}
J(q,K,L,P,\vec{M},\sigma,\vec{x}) = q^{\inv(\sigma)} (-1)^L \left(\frac{1}{2\pi i}\right)^L \int_{\mathcal{C}_R} dw_1 \cdots  \int_{\mathcal{C}_R} dw_P \int_{\tilde{C}^{P+1}} dw_{P+1} \cdots \int_{\tilde{\mathcal{C}}^L} dw_L \\
\times B(w_1,\ldots,w_L) \prod_{j=1}^{L-K} \left( \frac{(1-w_j)^{-M_j-1}}{w_j}\right) \prod_{j=L-K+1}^L  \left((1-w_j)^{-x_{\sigma(j+K-L)}-1} \right) e^{-(w_1+\cdots w_L)t}.
\end{multline*}
Note that the deformation from the large contour to the small contour only passes through the residue at $0$ (if it exists), and none of the residues of the form $qw_i=w_j$ in $B(w_1,\ldots,w_L)$. Therefore
\begin{equation}\label{IJ}
J(q,K,L,L-K,\vec{M},\sigma,\vec{x}) c(q,N,K,L-K) = \mathcal{I}(q,N,L,K,\vec{M},\sigma,\vec{x})
\end{equation}
and
\begin{equation}\label{Recur}
J(q,K,L,P-1,\vec{M},\sigma,\vec{x}) = J(q,K,L,P,\vec{M},\sigma, \vec{x}) + q^{P-L} J(q,K,L-1,P-1,\vec{M},\sigma,\vec{x}),
\end{equation}
with the latter identity using that 
$$
B(w_1,\ldots,w_{P-1},0,w_{P+1},\ldots,w_L) = (q^{-1})^{L-P}B(w_1,\ldots,w_{P-1},w_{P+1},\ldots,w_L) 
$$

Now, note that the expression in (a) equals (by \eqref{IJ}) 
$$
\sum_{L=K}^N J(q,K,L,L-K,\vec{M},\sigma,\vec{x}) c(q,N,K,L-K)
$$
while the expression in (b) equals
$$
 J(q,K,N,0,\vec{M},\sigma,\vec{x})c(q,N,K,N-K) = \sum_{L=N}^N J(q,K,L,L-N,\vec{M},\sigma,\vec{x}) c(q,N,K,L-K).
$$
Comparing these two expressions, it is seen that it suffices to show that there exist constants $\tilde{c}(q,N,K,L-K,a)$ such that
\begin{multline}\label{Dis}
\sum_{L=K+a}^N J(q,K,L,L-K-a,\vec{M},\sigma,\vec{x}) \tilde{c}(q,N,K,L-K,a) \\
= \sum_{L=K+a+1}^N J(q,K,L,L-K-a-1,\vec{M},\sigma,\vec{x}) \tilde{c}(q,N,K,L-K,a+1)
\end{multline}
where the constants $\tilde{c}(q,N,K,m,a)$ satisfy
$$
\tilde{c}(q,N,K,L-K,0) = c(q,N,K,L-K), \quad \tilde{c}(q,N,K,N-K,N-K) = c(q,N,K,N-K).
$$
We will show that \eqref{Dis} holds
$$
\tilde{c}(q,N,K,m,a) = q^Kq^{K+1}\cdots q^{K+a-1}c(q,N,K+a,m-a).
$$

Start by writing (dropping the $\vec{M},\sigma,\vec{x}$ dependence from the notation)
\begin{align*}
&\sum_{L=K+a}^N J(q,K,L,L-K-a) \tilde{c}(q,N,K,L-K,a) \\
 \stackrel{\eqref{cCur}}{=} &\sum_{L=K+a}^N J(q,K,L,L-K-a) q^Kq^{K+1}\cdots q^{K+a-1} \\
& \quad \quad \quad \times  (c(q,N,K+a+1,L-K-a) + q^{K+a}c(q,N,K+a+1,L-K-a-1))\\
=&\sum_{L=K+a}^N J(q,K,L,L-K-a) q^Kq^{K+1}\cdots q^{K+a-1}c(q,N,K+a+1,L-K-a)\\
+&\sum_{L=K+a+1}^N J(q,K,L,L-K-a) q^{K}q^{K+1}\cdots q^{K+a} c(q,N,K+a+1,L-K-a-1).
\end{align*}
Since $c(q,N,K+a+1,N-K-a)=0$, the two summations can be re--indexed into a single summation as
\begin{multline*}
\sum_{L=K+a}^N c(q,N,K+a+1,L-K-a) q^{K}q^{K+1}\cdots q^{K+a} \\
\times \left( J(q,K,L+1,L+1-K-a) + q^{-(K+a)}J(q,K,L,L-K-a)\right).
\end{multline*}
By \eqref{Recur}, this equals
$$
\sum_{L=K+a}^N c(q,N,K+a+1,L-K-a) q^{K}q^{K+1}\cdots q^{K+a} J(q,K,L+1,L-K-a),
$$
which also equals
$$
\sum_{L=K+a+1}^N c(q,N,K+a+1,L-K-a-1) \tilde{c}(q,N,K,L-K,a+1).
$$
Thus \eqref{Dis} is true, finishing the proof.
\end{proof}

Recall that $N^{(j)}(\eta)$ counts the number of species $1,\ldots,n+1-j$ particles that are weakly to the right of $0$. 

\begin{corollary}
Suppose that $\eta(t)$ evolves as the multi--species $q$--TAZRP, and begins in the initial conditions with infinitely many $i$th class particles at $-\mathcal{M}_i$ for some integers $0 < \mathcal{M}_n < \ldots < \mathcal{M}_1$. Fix non--negative integers $k_1,\ldots,k_n$.  Set $N=k_1+\ldots+k_n$ and define ${M}_j$ ($1 \leq j \leq N$) by 
$$
{M}_{k_1+\ldots+k_m+1} = {M}_{k_1+\ldots+k_m+2} = \cdots = {M}_{k_1+\ldots + k_{m+1}} = \mathcal{M}_{m+1}
$$
Then
$$
\mathbb{E}\left[ \prod_{j=1}^n q^{k_j N_0^{(n+1-j)}(\eta(t))}\right] = \frac{(-1)^N q^{N(N-1)/2}}{(2\pi i)^N}\int \frac{dw_1}{w_1} \cdots \int \frac{dw_N}{w_N} B(w_1,\ldots,w_N) \prod_{j=1}^N (1-w_j)^{-{M}_j+0} e^{-w_jt  }   .
$$
where the $w_r$--contour contains $qw_{r+1},\ldots, qw_n$ and $1$, but not $0$.
\end{corollary}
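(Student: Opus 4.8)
The plan is to combine the Markov duality of Proposition \ref{Dual} with the ``small contour'' transition probability formula of Proposition \ref{Main}(b), specialized to the degenerate case $K=0$. First I would apply Proposition \ref{Dual} to rewrite the left-hand side $\mathbb{E}[\prod_j q^{k_j N_0^{(n+1-j)}(\eta(t))}]$ as the summed probability $\sum_{\xi \in \mathcal{S}(\vec{k},\vec{\mathcal{M}})} \mathrm{Prob}_0(\xi \text{ at time } t)$, that is, as the probability that the dual $n$-species $q$-TAZRP, started from $k_j$ particles of species $j$ at the origin, occupies at time $t$ a configuration in which every species-$j$ particle lies strictly to the left of $\mathcal{M}_{n+1-j}$. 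This is exactly a cumulative ``all particles bounded above'' probability of the type evaluated in Proposition \ref{Main}.

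Next I would pass from the $n$-species model with multiplicities $k_1,\ldots,k_n$ to the model with exactly one particle of each of the $N$ species, using the Markov projection $\Pi_{\mathbf{N}}$ with $\mathbf{N}=(k_1,\ldots,k_n)$. Since $\mathrm{Prob}_{\Pi_{\mathbf{N}}\eta}(\Pi_{\mathbf{N}}\xi;t)=\mathrm{Prob}_{\eta}(\xi;t)$ and the bounding event pulls back along $\Pi_{\mathbf{N}}$ to an event of the form $\{z_1(t)\le\ast,\ldots,z_N(t)\le\ast\}$ on the distinct-species model, the summed probability is unchanged. I can then invoke Proposition \ref{Main}(b) with $K=0$: there are no prescribed exact locations $x_i$, the permutation $\sigma\in S(0)$ is trivial so $\inv(\sigma)=0$ and the product $\prod_{j=N-K+1}^{N}$ is empty, and the prefactor collapses as $q^{N-1}q^{N-2}\cdots q^{0}=q^{N(N-1)/2}$. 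This produces exactly the $N$-fold small-contour integral $\tilde{\mathcal{I}}(q,N,0,\vec{M},\sigma,\vec{x})$ on the right-hand side, with the contours $\tilde{\mathcal{C}}^{r}$ enclosing $q\tilde{\mathcal{C}}^{r+1},\ldots,q\tilde{\mathcal{C}}^{N}$ and $1$ but not $0$.

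The remaining point is to reconcile the exponent of $(1-w_j)$ and the indexing of the bounds. Proposition \ref{Main}(b) carries the exponent $-M_j-1$, whereas the corollary carries $-M_j+0$; the discrepancy is precisely the strict-versus-weak inequality, since the duality imposes the strict bound $z<\mathcal{M}$ while the bounds in Proposition \ref{Main} are weak, $z\le M$, so the weak bound to be substituted is $\mathcal{M}-1$ and $-(\mathcal{M}-1)-1=-\mathcal{M}$. I would likewise verify that the block definition $M_{k_1+\ldots+k_m+1}=\cdots=M_{k_1+\ldots+k_{m+1}}=\mathcal{M}_{m+1}$ matches, after the space-reversal built into the $n+1-j$ superscript of the duality, the ordered list of bounds $z_1(t)\le M_N,\ldots,z_N(t)\le M_1$ demanded by Proposition \ref{Main}(b).

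I expect the main obstacle to be exactly this last bookkeeping step: composing the space-reversal inherent in the duality, the strict-to-weak conversion of the inequalities (which accounts for the $-1$ shift in the exponent, and hence the ``$+0$''), and the re-indexing of the bounds so that the $M_j$ defined by the block formula line up with the ordered bounds of Proposition \ref{Main}(b). None of these reversals or shifts is deep, but getting all three to compose correctly is where care is required; the entire analytic content is supplied by Propositions \ref{Dual} and \ref{Main}, together with the Markov projection property.
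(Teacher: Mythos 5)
Correct, and essentially the paper's own argument: the paper likewise proves this corollary by composing Proposition \ref{Dual} (together with the color--blind Markov projection, which it leaves implicit) with the small--contour formula of Proposition \ref{Main}(b), and the shift from the exponent $-M_j-1$ there to $-M_j$ here is accounted for exactly by the strict--to--weak conversion you describe. The only divergence is how the final step is packaged: instead of invoking the degenerate case $K=0$, the paper applies Proposition \ref{Main}(b) with $K=1$ and sums the resulting geometric series over $x_1$, obtaining $w^{-1}\bigl(1-(1-w)^{-M_1}\bigr)$, whose $w^{-1}$ part integrates to zero because the small contour excludes the pole at $0$ --- and that computation is precisely the $K=1\to K=0$ reduction that your literal reading of ``any fixed $K<N$'' absorbs into the proposition itself.
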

\begin{proof}
Use Proposition \ref{Dual} and part (b) of Proposition \ref{Main}. Take $K=1$ in the small--contour integral $\tilde{I}$. The term $\inv(\sigma)$ is simply $0$ since the only $\sigma$ is the identity. The resulting sum over $x_1$ is a geometric series, resulting in $z_1^{-1}(1-(1-z_1)^{-M_1})$. Because the integrand does not contain the pole $z_1=0$, the result follows. 
\end{proof}

\begin{remark}
By comparing the formula to Theorem 2.11 of \cite{BCSDuality}, we see the equality
$$
\mathbb{E}\left[ \prod_{j=1}^n q^{k_j N_0^{(n+1-j)}(\eta(t))}\right]  = \mathbb{E}_{\text{step}}\left[ \prod_{j=1}^n q^{x_{M_j}(t)+M_j}\right]
$$
where $x_M(t)$ denotes the position of the $M$--th particle in the single--species $q$--TASEP (which is equivalent to the single--species $q$--TAZRP). This proves the corollary.
\end{remark}

\section{Computer Simulations}
\subsection{The embedded Markov chain}

\begin{example}
Let $X=(x_1,x_2,x_3)=(0,-1,-2)$ and $Y=(y_1,y_2,y_3)=(1,3,2)$. Then
$$
\mathbb{P}_{X,\mathcal{H}}(Y)=\frac{q^{2}\left(17 q^{6}+349 q^{5}+2500 q^{4}+8610 q^{3}+15932 q^{2}+16454 q+7168\right)}{729\left(q^{8}+13 q^{7}+73 q^{6}+232 q^{5}+460 q^{4}+592 q^{3}+496 q^{2}+256 q+64\right)}
$$
For $X=(0,-2,-3)$ and $Y=(1,2,1)$, the value of $\mathbb{P}_{X,\mathcal{H}}(Y)$ is the same.
For $X=(0,-2,-2)$ and $Y=(1,2,2)$, the values of  $\mathbb{P}_{X,\mathcal{H}}(Y)$ is
$$\frac{q^{2}\left(89 q^{5}+903 q^{4}+3325 q^{3}+5905 q^{2}+5091 q+1697\right)}{729\left(q^{7}+11 q^{6}+51 q^{5}+130 q^{4}+200 q^{3}+192 q^{2}+112 q+32\right)}$$
Same for $X=(0,-1,-1)$ and $Y=(1,3,3)$ it is the same.
\end{example}

\begin{example}
Here is a non--example. Suppose $X=(0,-1,-2)$ and $Y=(1,3,4)$. Then 
$$
\mathbb{P}_{\mathcal{H},X}(Y)=\frac{q^{3}\left(808 q^{6}+9507 q^{5}+45927 q^{4}+119125 q^{3}+179061 q^{2}+151389 q+55513\right)}{6561\left(q^{9}+15 q^{8}+99 q^{7}+378 q^{6}+924 q^{5}+1512 q^{4}+1680 q^{3}+1248 q^{2}+576 q+128\right)}.
$$
However, for $X=(0,-1,-3)$ and $Y=(1,3,3)$
$$
\mathbb{P}_{\mathcal{H},X}(Y)=\frac{q^{2}\left(808 q^{6}+9507 q^{5}+45927 q^{4}+119125 q^{3}+179061 q^{2}+151389 q+55513\right)}{19683\left(q^{8}+13 q^{7}+73 q^{6}+232 q^{5}+460 q^{4}+592 q^{3}+496 q^{2}+256 q+64\right)}
$$
even though the intersection numbers are the same. Similarly, for $X=(0,-2,-2)$ and $Y=(1,2,4)$,
$$
\mathbb{P}_{\mathcal{H}}(X,Y)=\frac{q^{3}\left(162 q^{5}+1600 q^{4}+6011 q^{3}+11259 q^{2}+10793 q+4195\right)}{729\left(q^{8}+13 q^{7}+73 q^{6}+232 q^{5}+460 q^{4}+592 q^{3}+496 q^{2}+256 q+64\right)}
$$
\end{example}

\subsection{The continuous--time multi--species $q$--TAZRP}
\begin{example}
A few examples of the shift--invariance are shown below. Set $q=0.6$ and $t=2$. In the table below, the coordinates on the vertical axis show the initial conditions $x$, while the coordinates on the horizontal axis show $y-x$, where the probability is on the event that the bounds on $X(2)$ being less than or equal to $y$. The entries show the probability of $\mathbb{P}_x(X(2) \leq y)$, estimated from 10000000 samples. Some of the entries are blank because the author used up the allotted time on the Texas A\&M High Performance Research Computing Center.








\begin{center}
\begin{tabular}{| c|c|c|c|c|c|c|c|}
& (0,1,0) & (0,0,1) &  (0,0,2) & (0,1,4)& (0,1,3) & (0,1,2) & (0,1,1)  \\
\hline
(0,0,0) &  0.0332632 & 0.0279420  &0.0343266  &  0.0727376& 0.0695906 & 0.0626983 & 0.0513806 \\
\hline 
$(0,0,-1)$&  0.0100777 & 0.0278481  & 0.0343091& 0.0727394 &0.0695814  &0.0629251&0.0482535 \\
\hline 
$(0,-1,0)$ & 0.0278073 &  0.0100938 && 0.0727483 & 0.0673652 & 0.0582948 & \\
\hline
$(0,-1-1)$ & 0.0165454 & 0.0121191 & 0.0152055 & 0.0726808 & 0.0695527 & 0.0628361 & 0.0515214\\
\hline
$(0,0,-2)$ &&&& 0.0726787& 0.0695210 &  &\\
\hline
\end{tabular}
\end{center}
\end{example}

\begin{example}




The contour integral can also be evaluated explicit using residues.  For example, under the fifth column, when $y-x=(0,1,3)$, evaluating the residue explicitly yields,

$$\frac{1}{2(-1+q)^{4}(1+q)^{3}} e^{-\left(\left(6+2 q+q^{2}\right) t\right)}\left(2 e^{(5+q) t}+2 e^{\left(3+2 q+q^{2}\right) t} q^{3}(1+q)^{3}(-3+q-t+q t)-\right.$$
+\(\left.e^{\left(4+q+q^{2}\right) t} q\left(2+4 q^{2}(-3+t)-2 q^{4}(3+2 t)+q^{5}\left(2+t^{2}\right)-2 q^{3}\left(11+3 t+t^{2}\right)+q\left(6+6 t+t^{2}\right)\right)\right)\), which at $t=2$ and $q=0.6$  numerically evaluates to 0.0695753.... This is approximately the values in the fifth column.

Similarly, for the fourth column, the residues yield
\begin{equation}
\begin{array}{l}\frac{1}{6(-1+q)^{5}(1+q)^{4}} \\ e^{-\left(\left(6+2 q+q^{2}\right) t\right)}\left(-6 e^{(5+q) t}+3 e^{\left(3+2 q+q^{2}\right) t} q^{3}(1+q)^{4}\left(12+6 t+t^{2}-2 q(2+t)^{2}+q^{2}\left(2+2 t+t^{2}\right)\right)-\right. \\ e^{\left(4+q+q^{2}\right) t} q\left(-6+q^{2}(60-18 t)+6 q^{6} t+6 q^{4}(15+2 t)-3 q^{5} t\left(2+4 t+t^{2}\right)+\right. \\ \left.\left.q^{7}\left(6+6 t+3 t^{2}+t^{3}\right)+3 q^{3}\left(52+8 t+5 t^{2}+t^{3}\right)-q\left(24+24 t+6 t^{2}+t^{3}\right)\right)\right)\end{array}
\end{equation}
which evaluates to 0.0727076....
\end{example}





















\bibliographystyle{alpha}
\bibliography{MultispeciesqBosonDraft}

\end{document}